\DeclareSymbolFont{stixletters}{LS1}{stix}{m}{it}
\DeclareMathAccent{\cev}{\mathord}{stixletters}{"91}
\newtheorem{theorem}{Theorem}
\newtheorem{proposition}{Proposition}
\newtheorem{lemma}{Lemma}[section]
\newtheorem{rem}{Remark}[section]
\definecolor{ao}{rgb}{0.0, 0.5, 0.0}
\definecolor{tuscanred}{rgb}{0.51, 0.21, 0.21}
\definecolor{babypink}{rgb}{0.96, 0.76, 0.76}
\definecolor{candypink}{rgb}{1.0, 0.67, 0.79}
\definecolor{darkpastelgreen}{rgb}{0.01, 0.75, 0.24}
\definecolor{red(pigment)}{rgb}{0.93, 0.11, 0.14}
\definecolor{unmellowyellow}{rgb}{1.0, 1.0, 0.5}
\definecolor{turquoiseblue}{rgb}{0.0, 1.0, 0.94}
\definecolor{blue(pigment)}{rgb}{0.23, 0.3, 0.92}
\newcommand\myworries[1]{\textcolor{red}{#1}}
\newcommand{\e}{\mathrm{e}}
\begin{document}
	
	\title{The largest subcritical component in inhomogeneous random graphs of preferential attachment type}
	
	\author{Peter {M\"orters} and Nick Schleicher\\[-1mm]
    \footnotesize{University of Cologne}\\[-2mm] \footnotesize{Department of Mathematics}\\[-2mm] \footnotesize{Weyertal 86-90}\\[-2mm]  \footnotesize{50931 K\"oln, Germany}}
 
    \date{}
	
	\maketitle

 \vspace{-1cm}

	\begin{abstract}
\noindent\textbf{Abstract:} We identify the size of the largest connected component in a subcritical inhomogeneous random graph with a kernel of preferential attachment type. The component is polynomial in the graph size with an explicitly given exponent, which is strictly larger than the exponent for the largest degree in the graph. This is in stark contrast to the behaviour of inhomogeneous random graphs with a kernel of rank one. Our proof uses local approximation by branching random walks going well beyond the weak local limit and novel results on subcritical killed branching random walks. 
\end{abstract}

%	\renewcommand{\contentsname}{Contents}
%	\tableofcontents

    \section{Introduction and main results}

Preferential attachment models give a credible explanation how typical features of networks, like scale-free degree distributions and small diameter, arise naturally from the basic construction principle of reinforcement. This makes them a popular model for scale-free random graphs. Unfortunately, the mathematical analysis of preferential attachment networks is much more challenging than that of many other scale-free network models, for example the configuration model. In particular, the problem of the size of the largest subcritical connected component, solved for the configuration model by Janson~\cite{janson}, is open for all model variants of preferential attachment. The purpose of the present paper is to solve this problem for a simplified class of network models of preferential attachment type. We believe that our model, which is an inhomogeneous random graph with a suitably chosen kernel,  has sufficiently many common features with the most studied models of preferential attachment networks to serve as a solvable model in this  universality class. Since inhomogeneous random graphs are interesting models in their own right, see \cite{Bollob_s_2007}, their analysis is also of independent interest.%
\bigskip\pagebreak[3]

The class of \emph{inhomogeneous random graphs} is parametrised by a symmetric kernel
$$\kappa\colon (0,1] \times (0,1]  \rightarrow (0, \infty)$$
and constructed such that, for each $n \in \mathbb{N}$, the graph $\mathscr G_n$ has vertex set $V_n = \{1, \ldots, n\}$ and 
edge set $E_n$ containing
each unordered pair of distinct vertices $\{i,j\}\subset V_n$ independently with probability
$$p_{ij}^{(n)}= \frac1n \kappa\left( \frac{i}{n}, \frac{j}{n} \right) \wedge 1.$$
Our idea is now to choose the kernel $\kappa$ in such a way that the inhomogeneous random graphs mimic the behaviour of preferential attachment models. In preferential attachment models vertices arrive one-by-one and attach themselves to earlier vertices with a probability proportional to their degree. Typically %in preferential attachment models 
degrees grow polynomially so that, for some $\gamma>0$, the degree of vertex~$i$ at time~$j>i$ is of order $(j/i)^\gamma$. For the expected degree of vertex $j$ at its arrival time to remain bounded from zero and infinity we need that $\gamma<1$ and the proportionality factor in the connection probability to be of order \smash{$(\sum_{i=1}^{j-1} (j/i)^\gamma)^{-1}\approx (1/j)$}. Hence in the preferential attachment models for vertices with index $i<j$
we have connection probability $p_{ij}^{_{(n)}} \approx i^{-\gamma}j^{\gamma-1}$. To get the same connection probabilities in the inhomogeneous random graph we choose the kernel 
$$\kappa(x, y) = \beta (x \vee y)^{\gamma - 1} (x \wedge y)^{-\gamma},$$
where the parameter $0 < \gamma < 1$ controls the strength of the preferential attachment, and $\beta > 0$ is an edge density parameter. Note that $\kappa$ is homogeneous of index $-1$ and  therefore the resulting edge probability $p_{ij}^{_{(n)}}$ is independent of the graph size~$n$.
We refer to this model as the \emph{inhomogeneous random graph of preferential attachment type}. \bigskip

It is easy to see that the inhomogeneous random graph of preferential attachment type has an asymptotic degree distribution which is heavy-tailed with power-law exponent $\tau=1+\frac1\gamma$. The analysis of a preferential attachment model in~\cite{Dereich_2013} can be simplified, see~\cite{moerters} for details, and shows that the size $S_n^{\mathrm{max}}$ of the largest connected component in $\mathscr G_n$
satisfies
$$\lim_{n\to\infty} \frac{S_n^{\mathrm{max}}}{n}
= \theta(\beta) \left\{ 
\begin{array}{ll}
>0 & \text{if }
%{\color{purple}\gamma \geq \frac{1}{2}}  \text{ or } 
\beta > \beta_c := (\frac{1}{4}-\frac{\gamma}{2}) \vee 0,\\
=0 & \text{otherwise.}\\
\end{array}
\right. $$
In this paper we are interested in the \emph{subcritical regime}, i.e.\ we always assume that $\gamma<\frac12$ and $0<\beta<\beta_c$. In this case all component sizes are of smaller order than $n$. Our first result identifies the component sizes of vertices in a moving observation window. We say that vertex $o_n\in V_n$ is \emph{typical} if
$\frac{o_n}n \to u$ for some $u>0$ and the behaviour of \emph{early typical vertices} refers to features of $\mathscr G_n$ rooted in vertex $o_n$ that hold asymptotically as $u\downarrow0$. 
\smallskip

\begin{theorem}[Early typical vertices]\label{mainresult}
Let $S_n(i)$ be the size of the connected component of vertex $i\in V_n$ in the
inhomogeneous random graph of preferential attachment type in the subcritical regime.
%with parameters $0<\gamma<\frac12$ and $0<\beta<\beta_c$. 
If $o_n\in V_n$ is such that $\frac{o_n}n\to u\in(0,1]$, then
$$        \lim_{u \downarrow 0} \lim_{n \rightarrow \infty} \mathbb{P}\left( S_n(o_n) \geq u^{-\rho_-}x   \right)= \mathbb{P} \left( Y\geq x  \right) \, ,$$
for all $x>0$, where $$\rho_{\pm} = \tfrac{1}{2} \pm \sqrt{(\gamma-\tfrac{1}{2})^2+\beta(2\gamma-1)}.$$
and $Y$ is a positive random variable satisfying 
$$\mathbb{P}\left( Y \geq x \right) = x^{-(\rho_+/\rho_-) + o(1)} \text{ as $x\to\infty$.}$$
\end{theorem}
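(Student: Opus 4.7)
The plan is to proceed in three stages: local weak convergence reduces the question to the total progeny of a branching process; a change of coordinates recasts this as a branching random walk with offspring suppressed below $0$; and a scaling analysis of that walk as the initial height tends to infinity yields the limit $Y$ and its tail. For the first stage, fix $u \in (0,1]$ and note that standard local weak convergence arguments for inhomogeneous random graphs give
\[
S_n(o_n) \xrightarrow[n\to\infty]{d} T(u),
\]
where $T(u)$ is the total progeny of the branching process $\mathcal{T}(u)$ whose root is of type $u$ and in which each individual of type $x \in (0,1]$ produces offspring at types drawn from a Poisson point process of intensity $\kappa(x,\cdot)$. Subcriticality $\beta<\beta_c$ makes $T(u)$ finite almost surely. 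Since the iterated limits in the theorem take $n \to \infty$ before $u \downarrow 0$, it remains to show $u^{\rho_-}T(u) \Rightarrow Y$ and to identify the tail of $Y$.

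Next I change variables to the log-type $h = -\log y$, so the root sits at height $s := -\log u$ and the type space becomes $[0,\infty)$. Using the homogeneity $\kappa(ax,ay) = a^{-1}\kappa(x,y)$, a particle at height $h$ produces offspring at displacement $\tau \in (-h,\infty)$ as a Poisson point process of intensity
\[
\mu(\tau) = \beta\,e^{-\gamma \tau}\mathbbm{1}_{\tau<0} + \beta\,e^{-(1-\gamma)\tau}\mathbbm{1}_{\tau>0}.
\]
The process is thus a branching random walk on $[0,\infty)$ in which offspring born in $(-\infty, 0)$ are suppressed. A direct computation gives
\[
\hat\mu(\lambda) = \frac{\beta(1-2\gamma)}{(-\gamma-\lambda)(1-\gamma+\lambda)}, \qquad \lambda \in (\gamma-1,-\gamma),
\]
and $\hat\mu(\lambda)=1$ precisely at $\lambda = -\rho_\pm$; subcriticality is exactly the condition that these two Malthusian roots be real and distinct.

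Write $T_s$ for the total progeny started at height $s$. The branching identity yields the nonlinear equation
\[
\psi(s,\theta) := -\log \mathbb{E}\bigl[e^{-\theta T_s}\bigr] = \theta + \int_{-s}^{\infty} \bigl(1 - e^{-\psi(s+\tau,\theta)}\bigr)\mu(\tau)\,d\tau.
\]
The natural scaling is $\theta = \tilde\theta\,e^{-\rho_- s}$: the exponent $\rho_-$ is forced by the requirement that, in the limit $s \to \infty$, the integral against the non-integrable tail $\mu(\tau) \sim \beta e^{-\gamma\tau}$ as $\tau \to -\infty$ remain finite; since $\psi$ is approximately linear for small argument, this reduces precisely to the Abelian identity $\hat\mu(-\rho_-)=1$. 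Passing to the limit gives a fixed-point equation
\[
\Psi(\tilde\theta) = \int_{-\infty}^\infty \bigl(1 - e^{-\Psi(\tilde\theta e^{\rho_-\tau})}\bigr)\mu(\tau)\,d\tau, \qquad \Psi(\tilde\theta) \sim \mathbb{E}[Y]\,\tilde\theta \text{ as } \tilde\theta \downarrow 0,
\]
whose solution under the appropriate boundary condition identifies $Y$ via $\mathbb{E}[e^{-\tilde\theta Y}] = e^{-\Psi(\tilde\theta)}$. The tail of $Y$ is then read off by matching: inserting $\Psi(\tilde\theta) = c\tilde\theta + A\tilde\theta^\alpha + o(\tilde\theta^\alpha)$ with $\alpha > 1$ into the limiting equation produces the subleading spectral condition $\hat\mu(-\rho_-\alpha)=1$, which forces $\alpha = \rho_+/\rho_-$, and a Tauberian inversion translates this into $\mathbb{P}(Y \geq x) = x^{-\rho_+/\rho_- + o(1)}$.

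The main obstacle is the rigorous justification of the scaling limit. Because $\mu$ is non-integrable at $-\infty$, the killed BRW is well-defined only thanks to the constraint at height $0$, and the large values of $T_s$ are driven by particles that drift close to $0$ and produce many short-lived descendants there. Controlling their renormalised cumulative contribution will likely require a spine decomposition or size-biasing argument adapted to killed BRW, together with sharp estimates for the running minimum of the walk and for the survival probability at height $s$; these ingredients presumably constitute the ``novel results on subcritical killed branching random walks'' announced in the abstract.
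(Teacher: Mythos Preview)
Your first two stages agree with the paper: reduce $S_n(o_n)$ to the total progeny $T(u)$ of a killed branching random walk started at $\log u$, then pass to logarithmic coordinates. (The paper carries out the first reduction by an explicit two-step coupling through an intermediate labelled tree rather than by invoking local weak convergence as a black box, but for fixed $u$ the conclusion is the same.)

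The route to the scaling limit $u^{\rho_-}T(u)\Rightarrow Y$ and to the tail of $Y$ is genuinely different. The paper does not analyse a Laplace-transform fixed-point equation. Instead it reinterprets the killed BRW as a Crump--Mode--Jagers process: particles that cross the barrier are \emph{frozen} rather than killed, and the collection of frozen descendants of a given particle (positions taken relative to that particle) forms its offspring point process $\xi$ on $(0,\infty)$ in the CMJ sense. A short martingale argument shows that $\rho_-$ is the Malthusian parameter of $\xi$, and then Nerman's classical theorem gives $e^{-\rho_- t}Z_t^\phi\to m_\phi Z$ directly; via Kyprianou's stopping-line martingale the limit is identified as a constant multiple of the additive martingale limit $W=\lim_n\sum_{|v|=n}e^{-\rho_- V(v)}$. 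The tail of $W$ then comes from $L^p$ moment bounds: Mecke's equation (exploiting the Poisson structure of the offspring) gives $\mathbb{E}[W_1^p]<\infty$, Iksanov's $L^p$-convergence criterion upgrades this to $\sup_n\mathbb{E}[W_n^p]<\infty$ for all $p<\rho_+/\rho_-$, and Markov's inequality yields the upper tail.

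Your Laplace-transform route could work in principle, but the gap you yourself flag---rigorously passing to the limit in the nonlinear recursion for $\psi(s,\theta)$---is real and not obviously closable by a spine or size-biasing argument alone; your Tauberian tail argument is likewise only formal, since matching exponents in an ansatz does not establish the regular variation a Tauberian inversion would need. The paper's CMJ reformulation sidesteps both difficulties by reducing everything to off-the-shelf results (Nerman, Biggins, Iksanov), and that reformulation---not new spine estimates for killed BRW---is the key device you are missing.
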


Our second theorem identifies the the size of the component of untypically early vertices. Here a vertex $o_n\in V_n$ is called \emph{untypically early} if 
$\frac{o_n}n \to 0$. 

\begin{theorem}[Untypically early vertices]\label{2}
Let $o_n\in V_n$ be such that $$ {o_n\to\infty} \text{ and } { \frac{o_n}{n} \to 0}.$$ Denoting 
by $S_n(o_n)$ the size of the component of $o_n$ in $\mathscr G_n$, then for any $\epsilon>0$ we have
$$     {\lim_{n \rightarrow \infty} \mathbb{P}\left( S_n(o_n) \geq (n/o_n)^{\rho_--\epsilon}   \right)= 1}.$$
%    \end{align*}.
\end{theorem}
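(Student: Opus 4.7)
The strategy is to lower-bound $S_n(o_n)$ by the total progeny of a branching random walk arising as a local approximation to the exploration tree around $o_n$. Set $t_n := \log(n/o_n)\to\infty$ and assign every vertex $i\in V_n$ the position $X_i := \log(n/i)\in[0,\log n]$. A breadth-first exploration of the component of $o_n$ couples, with errors negligible so long as only $o(n)$ vertices get explored, to a BRW $\mathcal T$ rooted at position $t_n$ whose offspring point process at a parent at $x$ has Poisson intensity
\[
m(x,y) = \beta \exp\bigl(-\gamma(x-y)_+ - (1-\gamma)(y-x)_+\bigr)
\]
on $[0,\log n]$, obtained by rewriting $\kappa$ in log-coordinates.

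A short computation shows that $\rho_\pm$ are the two real roots $\theta\in(\gamma,1-\gamma)$ of the characteristic equation $\beta/(\gamma-\theta)+\beta/(\theta-(1-\gamma))=1$, so $V_\pm(x):=e^{\rho_\pm x}$ are right-eigenfunctions of the mean offspring operator $\tilde T f(x):=\int m(x,y)f(y)\,dy$ with eigenvalue $1$, modulo boundary terms of lower order $O(e^{\gamma x})$. Hence $M_k:=\sum_{i\in\mathcal T_k} V_-(X_i)$ is (essentially) a nonnegative martingale with $M_0=V_-(t_n)=(n/o_n)^{\rho_-}$. The first step is to show this martingale does not collapse: via a truncated second-moment estimate (controlled by $V_+$ as a size-biasing weight, giving $\mathbb E[M_k^2\wedge K^2]\lesssim K\,M_0$ for a suitable cut-off $K\asymp M_0$) and the Paley--Zygmund inequality, one obtains $\mathbb P(M_k\ge \tfrac12 M_0)\ge c>0$ uniformly for $k$ up to a slowly growing horizon $K_0$. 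The second step is to show that by generation $K_0$ the $V_-$-mass is borne by particles sitting at positions $O(1)$, where $V_-=O(1)$; this follows from the ergodicity of the size-biased spine's position chain, which is pushed back to the boundary by its strong leftward drift. Combining the two gives $|\mathcal T_{K_0}|\ge c\,(n/o_n)^{\rho_-}$ with positive probability, and replaying the argument on the $\asymp (n/o_n)^\gamma$ near-independent subtrees rooted at the direct neighbours of $o_n$ upgrades this to probability tending to $1$, absorbing the sub-polynomial losses into the $\epsilon$ in the exponent.

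The hard part will be the killed-BRW analysis. Because the shift distribution of $\mathcal T$ has no exponential moments on the left (the factor $e^{-\gamma(x-y)}$ for $y<x$ blows up as $y\downarrow 0$), standard tools such as Biggins' theorem or $L^2$-convergence of additive martingales are unavailable without truncation --- indeed $M_k$ fails to be $L^2$-bounded whenever $\rho_-\ge \tfrac13$. The corresponding many-to-two variance bound, together with quantitative control of the spine's ergodicity against the hard wall at $X=0$, are precisely the new inputs announced in the abstract. Once they are available, the coupling yields $S_n(o_n)\ge|\mathcal T_{K_0}|\ge(n/o_n)^{\rho_--\epsilon}$, and the running condition $|\mathcal T|=o(n)$ is automatic since $\rho_-<1$ and $o_n\to\infty$.
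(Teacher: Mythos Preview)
Your approach is structurally different from the paper's. Rather than a one-shot BRW analysis from position $t_n=\log(n/o_n)$ to the barrier, the paper fixes a scale parameter $0<u<1$ and iterates a \emph{single-scale} step (Proposition~\ref{mainlemma}) roughly $k\sim \log(n/o_n)/\log(1/u)$ times: each application carries a collection of at most $m^\rho$ vertices with index in $(bum,um]$ to at least $\varepsilon u^{-\rho}$ new vertices with index in $(bm,m]$, with $m=u^{k-j}n$ at the $j$th step. This embeds a supercritical Galton--Watson tree of depth $k$ whose $k$th generation has size $(\varepsilon^2 u^{-\rho})^k\approx(n/o_n)^{\rho-\delta}$, lower-bounding $S_n(o_n)$. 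The killed-BRW limit theorem (Proposition~\ref{twokilled}) is proved via Nerman's theorem for general branching processes, not spine arguments, and it enters only to justify the single-scale step with $u$ \emph{fixed} and $m\to\infty$. Both proofs upgrade positive to high probability by starting from many neighbours of~$o_n$.

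The main gap in your sketch is the graph--BRW coupling. The assertion that errors are negligible ``so long as only $o(n)$ vertices get explored'' does not address the real obstruction, which is \emph{collisions}: distinct BRW particles projecting (via $\pi_n$) to the same vertex. Whether collisions are rare depends on the spatial profile of the entire BRW throughout the exploration, not just on its total size---particles near the starting position $t_n$ project to vertices of index $\sim o_n$ and collide with probability $\sim 1/o_n$ each. The paper's iterative scheme sidesteps this cleanly: at each step at most $\tfrac{a}{2}u^{-\rho}$ new particles are created (a constant depending on $u$ but not on $m$), all with vertex index $\geq bum$, so the collision probability is $O(u^{-2\rho}/m)\to 0$ as $m\to\infty$; see the analysis of line~\ref{error 1} in Lemma~\ref{successlemma}. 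A secondary gap is the BRW analysis itself: that $M_k\asymp M_0$ does not by itself give many particles (a single particle at position $t_n$ already contributes $M_0$), and ``ergodicity of the size-biased spine'' is not the right notion---the spine under the $V_-$-tilt is a transient walk with drift towards the barrier, and passing from spine behaviour to the spatial profile of the whole tree needs more than a heuristic. The paper instead recasts the killed BRW as a Crump--Mode--Jagers process and invokes Nerman's almost-sure convergence theorem to get $u^{\rho_-}I(0,b)\to Y>0$ directly. Finally, your concern that moment methods are ``unavailable without truncation'' is misplaced: Lemmas~\ref{l1}--\ref{l2} use Mecke's equation (exploiting the Poisson offspring) to show $(W_n)$ is bounded in $L^p$ for all $1<p<\rho_+/\rho_-$, so no truncation is needed.
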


The idea behind this result is to exploit a self-similarity feature of graphs of preferential attachment type and leverage~Theorem~\ref{mainresult}. Loosely speaking, we find for fixed small $u>0$ a positive  integer $k$ with $o_n\approx u^kn$. Then $o_n$ is early typical in the graph $\mathscr G_{u^{k-1}n}$ and by Theorem~\ref{mainresult} we get a connected component with size of order $u^{-\rho_-}$. Many vertices in this component are themselves early typical in $\mathscr G_{u^{k-2}n}$ and we can use 
Theorem~\ref{mainresult} again, getting a component with size of order $u^{-2\rho_-}$. Continuing the procedure altogether $k$ times we build a component of size
$u^{-k\rho_-}\approx (n/o_n)^{\rho_-}$ in $\mathscr G_{n}$.
\medskip

Theorem~2 gives a lower bound on the size of the largest component. As it describes the size of the components of the most powerful vertices in $\mathscr G_n$  it is plausible that this result also gives the right order of the largest component. Our main result confirms this. It
is the first result in the mathematical literature identifying the size of the largest subcritical component up to polynomial order for a random graph model of preferential attachment type.

\begin{theorem}[Largest subcritical component]
Denoting by $S_n^{\mathrm{max}}$ the size of the largest component in $\mathscr G_n$ we have
$$\lim_{n \rightarrow \infty} \frac{\log S_n^{\mathrm{max}}}{\log n} = \rho_{-},$$
in probability, where 
$$\rho_{-} = \tfrac{1}{2} - \sqrt{(\tfrac{1}{2}-\gamma)^2-\beta(1-2\gamma)} > \gamma.$$
\end{theorem}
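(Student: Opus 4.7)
The plan is to prove the two one-sided statements $\mathbb{P}(S_n^{\max}\leq n^{\rho_-+\epsilon})\to 1$ and $\mathbb{P}(S_n^{\max}\geq n^{\rho_--\epsilon})\to 1$ for each fixed $\epsilon>0$, which together give convergence in probability of $\log S_n^{\max}/\log n$ to $\rho_-$.

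The lower bound is an easy consequence of Theorem~\ref{2}. I pick a sequence $o_n\to\infty$ with $o_n/n\to 0$ growing slowly enough that $\log o_n/\log n\to 0$---for instance $o_n=\lceil\log n\rceil$. Then $(n/o_n)^{\rho_--\epsilon/2}\geq n^{\rho_--\epsilon}$ for all sufficiently large $n$, and Theorem~\ref{2} gives $S_n(o_n)\geq(n/o_n)^{\rho_--\epsilon/2}$ with probability tending to one. Since $S_n^{\max}\geq S_n(o_n)$, the lower bound follows.

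For the upper bound I would use a first-moment argument. Since a component of size at least $k$ contains at least $k$ vertices $i$ with $S_n(i)\geq k$, Markov's inequality yields
$$\mathbb{P}\bigl(S_n^{\max}\geq k\bigr)\leq\frac{1}{k}\sum_{i=1}^n\mathbb{P}\bigl(S_n(i)\geq k\bigr).$$
The crucial input is a quantitative version, uniform in $1\leq i\leq n$, of the heuristic suggested by Theorem~\ref{mainresult} that $S_n(i)$ behaves like $(n/i)^{\rho_-}Y$ with $\mathbb{P}(Y\geq x)\lesssim x^{-\rho_+/\rho_-+\eta}$. Concretely, I aim at a tail bound of the form
$$\mathbb{P}\bigl(S_n(i)\geq k\bigr)\leq C\,(n/i)^{\rho_+}\,k^{-\rho_+/\rho_-+\eta}$$
for arbitrarily small $\eta>0$, valid for all $k$ above a polynomial threshold. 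Since $\rho_+<1$ and $\rho_-+\rho_+=1$, summation yields $\sum_{i=1}^n(n/i)^{\rho_+}\lesssim n$, so the Markov bound becomes $\mathbb{P}(S_n^{\max}\geq k)\lesssim n\,k^{-1/\rho_-+\eta}$. Substituting $k=n^{\rho_-+\epsilon}$ and choosing $\eta$ small relative to $\epsilon$ drives this to zero.

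The hard part will be establishing the uniform tail bound. Theorem~\ref{mainresult} describes the limit law $Y$ only for \emph{early typical} starting vertices, and it quantifies the tail of $Y$ only in the asymptotic $o(1)$-sense, whereas the first-moment estimate needs a non-asymptotic bound holding uniformly in the starting vertex~$i$. I would approach this by extending the branching random walk coupling that underlies Theorem~\ref{mainresult} to arbitrary starting indices, using the self-similarity reduction sketched after Theorem~\ref{2} to handle untypically early vertices, and by sharpening the subcritical killed branching random walk estimates alluded to in the abstract into a quantitative upper tail bound on the total progeny with the matching polynomial exponent $\rho_+/\rho_-$. Once this ingredient is in place, the union bound above closes the argument.
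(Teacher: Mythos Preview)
Your proposal is correct and follows the same overall strategy as the paper: the lower bound comes straight from Theorem~\ref{2}, and the upper bound proceeds via the first-moment inequality
\[
\mathbb{P}(S_n^{\max}\ge k)\le\frac1k\sum_{i=1}^n\mathbb{P}(S_n(i)\ge k)
\]
combined with a branching-random-walk domination and a tail estimate whose exponent is essentially $\rho_+/\rho_-$, together with $\rho_++\rho_-=1$.

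The paper streamlines what you call the ``hard part'' in one respect worth noting. Instead of seeking a pointwise bound $\mathbb{P}(S_n(i)\ge k)\le C(n/i)^{\rho_+}k^{-\rho_+/\rho_-+\eta}$ uniformly in $i$ and then summing, it rewrites the sum as $n\,\mathbb{P}(|\mathscr{C}_n(O_n)|\ge k)$ for $O_n$ uniform on $\{1,\ldots,n\}$ and couples $\mathscr{C}_n(O_n)$ \emph{directly} to a killed branching random walk $\mathscr{T}_{0,1}(-X)$ started at a standard exponential location $-X$ (using a slightly increased density $\tilde\beta>\beta$ to make the domination one-sided). The required tail bound then becomes the single estimate $\mathbb{P}(|\mathscr{T}_{0,1}(-X)|\ge n^{\rho_-+\epsilon})\le n^{-\rho_++\epsilon}$, obtained from $L^p$ moment bounds on the additive martingale $W_n=\sum_{|v|=n}e^{-\rho_-V(v)}$ for $p<\rho_+/\rho_-$, integrated against the exponential starting point. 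This sidesteps the uniformity-in-$i$ issue you raise, and in particular no self-similarity argument is needed on the upper-bound side; that idea is used only inside the proof of Theorem~\ref{2} for the lower bound.
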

\medskip

\begin{rem}
Observe that the size of the largest component in a finite random graph is bounded from below by the maximum over all degrees. In scale-free graphs this is of polynomial order in the graph size. It is shown in~\cite{janson} that this lower bound is sharp for configuration models and inhomogeneous random graphs with a kernel of rank one. In our model the largest degree is $n^{\gamma+o(1)}$, whereas the largest component has size $n^{\rho_-+o(1)}$ and is therefore much larger.  A lower bound on the largest component larger than the maximal degree has also been found for a different preferential attachment model in \cite{Ray}. 
As this effect is due to the self-similar nature of the graphs of preferential attachment type 
we conjecture that it is a universal feature of preferential attachment graphs that if the largest degree is 
\smash{$n^{\gamma(\beta)+o(1)}$}
the largest subcritical component is of 
size \smash{$n^{\rho(\beta)+o(1)}$} for some
\smash{$\rho(\beta)>\gamma(\beta)$} with 
\smash{$\rho(\beta)\to\frac12$} as $\beta\uparrow \beta_c$.
\end{rem}

The remainder of the paper is organized as follows. We will not give the full proof of Theorem~1 here as the argument is described in the extended abstract~\cite{morters_et_al:LIPIcs.AofA.2024.14}. We will however give a completely self-contained proof of Theorem~2 and therefore include most arguments that are needed for the proof of Theorem~1. This proof  of Theorem~2 will be given in Section~2. Note that Theorem~2 also establishes the lower bound in Theorem~3 and
%In Section 2 we will prove Theorem 1 by first looking at the inner limit, when $n\to\infty$, which we investigate using a coupling of the neighbourhood of vertex $o_n$ to a killed branching random walk. This is done in Section~\ref{sec2.1}. In \ref{sec2.2} we study the number of particles absorbed at the killing boundary of the branching random walk, from which our result follows. 
in Section~3 we complete the proof of Theorem~3 by providing an upper bound.

\section{Proof of Theorem 2}
\label{sec2.2}

For the proof of Theorem~2 we embed a Galton-Watson tree into our graph. To explain the idea fix  small parameters $0<u,b <1 $. Let $m=u^{k-1}n$ for some positive integer~$k$ (in the following, to avoid cluttering notation, we do not make the rounding of $m$ to an integer explicit). We explore the neighbourhood of a vertex $o_n$ with $bum \leq o_n\leq um$ in the graph $\mathscr G_{m}$. We will see below that this exploration can be coupled to a branching random walk killed upon leaving a bounded interval such that with high probability the number of particles near the right interval boundary exceeds the number $S_m(o_n)$ of vertices in  the connected component of $o_n\in\mathscr G_m$ with index at least $bm$. These vertices will be the offspring of the vertex $o_n$ in our Galton-Watson tree. Before describing this coupling  in detail we give a lower bound on the number of particles in the killed branching random walk. This result, formulated as Proposition~\ref{twokilled}, may be of independent~interest.\medskip

We denote by $\mathscr V$ the tree of Ulam-Harris labels, i.e.\ all finite sequences of natural numbers
including the empty sequence $\varnothing$, which denotes the root. Given a label $v=(v_1,\ldots, v_n)\in\mathscr V\setminus\{\varnothing\}$ we denote 
by $|v|=n$ its length, corresponding to the generation of vertex $v$ in the tree and by $\cev{v}=(v_1,\ldots, v_{n-1})$ the parent of $v$ in the tree. We attach to every label $v\in\mathscr V$ an independent sample $P_v$ of a point process with infinitely many points 
$P_v(1)\leq P_v(2) \leq P_v(3) \leq\ldots$ in increasing order  on the real line, in our case the Poisson process with intensity measure
$$\pi(dx)=\beta (e^{\gamma x} \mathbbm 1_{x>0}
+ e^{(1-\gamma) x} \mathbbm 1_{x<0}) \, dx.$$
We denote by $$\mathscr T(x)=
( V(v) \colon v \in\mathscr V)$$
the \emph{branching random walk} started in $x\in\mathbb R$, which is characterised by the position
$$V(v)= x + \sum_{i=1}^{|v|} P_{(v_1,\ldots, v_{i-1})}(v_i)$$
of the particle with label $v\in\mathscr V$. 
When started in $\log u$ we denote the underlying probability and expectation by $\mathbb P_u, \mathbb E_u$ and denote the branching random walk by $\mathscr T$. The Laplace transform of the branching random walk is given by
\begin{align*}
    \psi(t)= \mathbb{E}_1\Big[\sum_{\abs{v}=1}e^{-tV(v)}\Big] = \frac{\beta}{t-\gamma}+\frac{\beta}{1-\gamma-t} \quad\text{ if $\gamma<t<1-\gamma$,}
\end{align*}
and $\psi(t)=\infty$ otherwise.
The domain of $\psi$ is nonempty if $\gamma<\frac12$ and there exists $t>0$ with $\psi(t)<1$ if and only if~$0<\beta <\frac{1}{4}-\frac{\gamma}{2}$, i.e.\ in the subcritical regime for the inhomogeneous random graph. Under this assumption %on $\beta$ and~$\gamma$ 
there exist $\rho_- < \rho_+$ with 
$\psi(\rho_-)=\psi(\rho_+)=1$.
We can calculate both values explicitly, 
$$
\rho_{\pm} = \tfrac{1}{2} \pm \sqrt{(\gamma-\tfrac{1}{2})^2+\beta(2\gamma-1)}.
$$
For $0\leq a<d$ denote by $\mathscr T_{a,d}(x)$ the \emph{killed branching random walk} starting with a particle in location~$x$, where all particles located outside the interval  $(\log a, \log d]$ are killed together with their descendants. Again we omit the starting point from the notation if it is clear from the context. Note that $v=(v_1,\ldots,v_n)\in\mathscr T_{a,d}$ means that, for all $0\leq i \leq n$,
$$ \log a < V(v_1,\ldots,v_i) \leq \log d.$$
Of particular interest is $\mathscr T_{0,1}$ where particles with positions on the positive half-line are killed. 
The condition $\gamma <\frac{1}{2}$ and $\beta <\frac{1}{4}-\frac{\gamma}{2}$ is necessary and sufficient for $\mathscr T_{0,1}(x)$ started at $x\leq0$ to suffer extinction in finite time almost surely, see~\cite[Theorem~1.3]{shi}.
\pagebreak[3]\medskip

For $0\leq a \leq b<1$ denote by  $I(a,b)$ be the total number of surviving particles of $\mathscr T_{a,1}$ located in $(\log b,0]$. 
{We prove a limit theorem 
for $I(0,b)$ under $\mathbb P_u$ when $u \downarrow0$.}
%which is also one of the key ingredients for Theorem~\ref{mainresult}.

\begin{proposition}\label{twokilled}
{For every fixed $0\leq b<1$} the random variable {$I(0,b)$ satisfies
$$\lim_{u \downarrow0} \mathbb{P}_u \left( I(0,b) \geq xu^{-\rho_-}  \right) 
= \mathbb{P} \left( Y\geq x  \right),$$}
where $Y$ is a positive random variable  satisfying 
\begin{equation}\label{taily}
\mathbb{P}\left( Y \geq x \right) = x^{-(\rho_+/\rho_-) + o(1)} \text{ as $x\to\infty$.}
\end{equation}
\end{proposition}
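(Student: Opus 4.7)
My plan is to embed the shifted killed branching random walk in a general branching process and invoke Nerman's convergence theorem~\cite{nerman}. Setting $t := -\log u > 0$ and shifting all positions by $t$, the process under $\mathbb{P}_u$ becomes a branching random walk started at the origin with killing barrier at height $t$, and $I(0,b)$ becomes the number of surviving particles located in the strip $(t - |\log b|,\, t]$. I decompose this branching random walk into \emph{clusters}: the cluster rooted at a particle $x$ with position $\sigma_x := V(x)$ consists of $x$ together with all descendants whose ancestral path from $x$ stays at positions $\leq \sigma_x$. A cluster spawns ``children'' in the general branching process at the positions of those descendants that first strictly exceed $\sigma_x$; these positions form a point process $\xi_x$. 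A standard many-to-one computation using the displacement intensity $\pi$ shows that $\xi$ has Malthusian parameter $\rho_-$, and the total cluster size $X$ satisfies $\mathbb{E}[X] < \infty$ because one can find $\alpha > \rho_-$ with $\psi(\alpha) < 1$.

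To count surviving particles in the strip I use the age-dependent characteristic
$$
\chi_x(a) \;=\; \#\bigl\{\, y \in \text{cluster of } x : V(y) - \sigma_x \in (a - |\log b|,\, 0]\,\bigr\} \quad \text{for } a \in [0, |\log b|),
$$
extended by $0$ for $a \geq |\log b|$. A particle of the shifted killed branching random walk survives if and only if its cluster root $x$ satisfies $\sigma_x \leq t$, and its absolute position $V(y)$ then lies in $(t - |\log b|, t]$ precisely when its displacement $V(y) - \sigma_x$ lies in $(a - |\log b|, 0]$ for $a := t - \sigma_x$; hence
$$
Z^\chi_t \;:=\; \sum_x \chi_x(t - \sigma_x) \;\stackrel{d}{=}\; I(0,b).
$$
The characteristic is dominated by $X$ and has compact support in $a$, so the integrability and regularity conditions of Nerman's theorem~\cite[Theorem~3.1]{nerman} reduce to $\mathbb{E}[X] < \infty$. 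The theorem therefore yields a positive constant $c = c(b)$ with
$$
u^{\rho_-} I(0,b) \;=\; e^{-\rho_- t} Z^\chi_t \;\longrightarrow\; c\, W \qquad \text{in probability as } u \downarrow 0,
$$
where $W$ is the limit of the additive martingale $\bigl(\sum_{|v|=n} e^{-\rho_- V(v)}\bigr)_n$, strictly positive by Biggins' theorem~\cite{biggins}. Setting $Y := cW$ gives the stated convergence in distribution, since $W$ (and hence $Y$) is atomless.

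The tail estimate~\eqref{taily} follows because $Y$ is a positive constant multiple of $W$, and $\mathbb{P}(W \geq x) = x^{-(\rho_+/\rho_-) + o(1)}$ by \cite{818}: the exponent is precisely the ratio of the two roots of $\psi(\alpha) = 1$. The step I expect to require the most care is the verification of direct Riemann integrability of $a \mapsto \mathbb{E}[\chi(a)]$ demanded by Nerman's theorem; this reduces to a uniform bound on the expected number of origin-cluster members with position in $(-s, 0]$ as $s$ varies over $[0, |\log b|]$, which is controlled by combining $\mathbb{E}[X] < \infty$ with the continuity of the intensity $\pi$ and a many-to-one argument applied to the random walk with Laplace transform $\psi$.
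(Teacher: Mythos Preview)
Your proposal is correct and follows essentially the same route as the paper: shift by $t=-\log u$, decompose the killed branching random walk into clusters (the paper's ``branching'' and ``frozen'' particles are exactly your cluster and first-exit points), identify $\rho_-$ as the Malthusian parameter of the first-exit process~$\xi$, and apply Nerman's theorem with the compactly supported characteristic counting cluster members landing in the strip. The paper spells out two points you leave as citations or sketches: it verifies the Biggins conditions for $W>0$ via a direct $L^p$ bound using Mecke's equation and Iksanov's criterion (rather than citing~\cite{818}), and it checks $\mathbb{E}\int e^{-\rho_- t}\xi(dt)=1$ by an optional-stopping/dominated-convergence argument rather than a bare many-to-one; your flagged dRi concern is handled exactly as you anticipate, via domination by the integrable cluster size.
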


Proposition~\ref{twokilled}  will be proved in Section~\ref{sec_twokilled}.
\medskip

This proposition will play a crucial role when we construct the simultaneous coupling of the neighbourhoods of vertices in $\mathscr G_m$. We use the projection
\begin{align} \label{pim}
    \pi_m \colon & (-\infty,0] \to \{1,\ldots,m\},
\end{align}
defined by
$$-\sum_{k=\pi_m(x)}^m \frac1k < x \leq -\sum_{k=\pi_m(x)+1}^m \frac1k$$
to map locations on the negative half-line to vertex numbers in $\mathscr G_m$. Its partial inverse is 
\begin{align*}
 \phi_m \colon & \{1, \ldots, m\} \rightarrow (-\infty,0] \quad , \quad  i \mapsto - \sum_{j=i+1}^{m} \frac{1}{j} \; .
\end{align*}
For any set $\mathcal U\subset \{1,\ldots, m\}$ we denote by $\mathscr F_{\mathcal U}$ the $\sigma$-algebra generated by the restriction of the random graph $\mathscr G_m$ to the vertex set $\mathcal  U$.
Let $\gamma<\rho<\rho_-$.
\medskip

\begin{proposition}\label{mainlemma}
 For every $0<b< 1$ there exist $\varepsilon >0$, $a>1$ and $0<u_0<b$ with the property that for every $0<u<u_0$ there exists $m(u)$ such that, for all $m\ge m(u)$ and any set $\mathcal U'\subset \{1,\ldots, um\}$ with $|\mathcal  U'|\leq a m^{\rho}$ and family of $d\leq m^{\rho}$ vertices in 
 $\mathcal  U'$ with 
$$bum<u_1 < \cdots < u_d \leq um, $$  
there exist\\[-5mm]
\begin{itemize}
\item a set  $\mathcal U'\subset\mathcal U\subset \{1,\ldots, m\}$ with $|\mathcal  U|\leq a(m/u)^{\rho}$,\\[-7mm]
\item conditionally given $\mathscr F_{\mathcal U'}$ independent random variables
 $X_1, X_2 , \ldots , X_{d}$  with
 \[
  X_i = 
  \begin{cases}
    \lceil\varepsilon u^{-\rho}\rceil, & \text{with probability } \varepsilon >0 ,\\
    0, & \text{with probability  } 1-\varepsilon,\\
  \end{cases}  
\]\\[-7mm]
\item subsets $\mathcal X_1, \ldots, \mathcal X_d\subset \mathcal U \cap \{bm,\ldots,m\}$ with $|\mathcal X_i|=X_i$ such that $\mathcal X_i$ is contained in the connected component  of  $u_i$ in $\mathcal U$.
\end{itemize}
\end{proposition}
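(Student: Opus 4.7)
The plan is to exhibit, for each $u_i$, an exploration of a reserved disjoint pool of vertices that can be coupled to a killed branching random walk started near $\phi_m(u_i)\approx\log u$, and then invoke Proposition~\ref{twokilled} to produce Bernoulli successes which are conditionally independent given $\mathscr{F}_{\mathcal{U}'}$ across $i$.

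\emph{Choice of $\varepsilon$.} Since $\rho<\rho_-$ we have $\lceil\varepsilon u^{-\rho}\rceil=o(u^{-\rho_-})$ as $u\downarrow 0$, so Proposition~\ref{twokilled} yields
\[
\liminf_{u\downarrow 0}\mathbb{P}_u\bigl(I(0,b)\geq\lceil\varepsilon u^{-\rho}\rceil\bigr)\geq\mathbb{P}(Y>0)>0
\]
for every $\varepsilon>0$. I pick $\varepsilon>0$ and $u_0\in(0,b)$ so that the left-hand side exceeds $2\varepsilon$ for all $u<u_0$.

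\emph{Construction.} For each $i=1,\ldots,d$ I reserve a pool $\mathcal{E}_i\subset\{1,\ldots,m\}\setminus\mathcal{U}'$, the pools being pairwise disjoint and of total size $\sum_i|\mathcal{E}_i|\leq a(m/u)^\rho-|\mathcal{U}'|$. Setting $\mathcal{U}=\mathcal{U}'\cup\bigcup_i\mathcal{E}_i$ yields $|\mathcal{U}|\leq a(m/u)^\rho$. In parallel, I run a breadth-first search of the component of $u_i$ in the induced subgraph on $\{u_i\}\cup\mathcal{E}_i$; the Poisson--Bernoulli techniques of Section~2.1 couple this, with high probability, to the killed branching random walk $\mathscr{T}_{0,1}(\phi_m(u_i))$ truncated to its first $|\mathcal{E}_i|$ particles. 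I call exploration $i$ \emph{successful} if at least $\lceil\varepsilon u^{-\rho}\rceil$ of its visited vertices lie in $\{bm,\ldots,m\}$; conditionally on success I toss an independent coin with bias $\varepsilon/\mathbb{P}(\mathrm{success}_i\mid\mathscr{F}_{\mathcal{U}'})\in[0,\tfrac12]$ and, on heads, set $X_i=\lceil\varepsilon u^{-\rho}\rceil$ with $\mathcal{X}_i$ any sub-collection of that size among the successful vertices; in all other cases $X_i=0$ and $\mathcal{X}_i=\varnothing$. Because the pools are pairwise disjoint and disjoint from $\mathcal{U}'$, the $d$ explorations depend on disjoint edge variables outside $\mathscr{F}_{\mathcal{U}'}$, yielding the required conditional independence, and the thinning ensures each $X_i$ has exactly the claimed Bernoulli law.

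\emph{Main obstacle.} The crux is calibrating the pools $\mathcal{E}_i$ so that (i) the total size stays within $a(m/u)^\rho$, (ii) each pool is dense enough along the relevant vertex ranges that the Section~2.1 coupling goes through, and (iii) the truncated exploration of size $\mathcal{O}(u^{-\rho})$---much less than the typical BRW size $u^{-\rho_-}$---nevertheless produces $\lceil\varepsilon u^{-\rho}\rceil$ target vertices with probability at least $2\varepsilon$. For (iii) one exploits that, within the first $O(\log(1/u))$ generations of $\mathscr{T}_{0,1}(\log u)$, a positive fraction of the eventual total $I(0,b)$ is already captured: the initial parent at $\log u$ alone has $\Theta(u^{-\gamma})$ surviving children in $(\log b,0]$, and iterated branching of its ``deep'' descendants near $\log u$ accumulates the required $\varepsilon u^{-\rho}$ target vertices before the pool is filled.
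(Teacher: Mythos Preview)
Your proposal has two genuine gaps that prevent it from going through as written.

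\textbf{The disjoint-pool coupling does not work.} You want to reserve pools $\mathcal E_i$ in advance and couple the exploration of $u_i$ inside the induced subgraph on $\{u_i\}\cup\mathcal E_i$ to a (truncated) killed branching random walk. But the Poisson--Bernoulli coupling identifies each BRW particle $v$ with the graph vertex $\pi_m(V(v))$, and this projection lands on an \emph{arbitrary} element of $\{1,\ldots,m\}$, not on a pre-specified pool. If $\pi_m(V(v))\notin\mathcal E_i$ the coupling simply breaks: the BRW sees an offspring that the pooled graph exploration cannot see. Since each pool has size $O(u^{-\rho})$ out of $m$ vertices, almost every BRW particle misses the pool and the two processes decouple immediately. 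The paper avoids this by exploring sequentially over $i$ with no pools: the $i$th BRW is allowed to project anywhere in $\{1,\ldots,m\}$, and one declares the $i$th exploration a \emph{failure} if any projected particle collides with the cumulative set $\mathscr U_{i-1}$ already built. Because $|\mathscr U_{i-1}|\le a(m/u)^\rho$ and all relevant vertices exceed $bum$, the collision probability per particle is $O(m^{\rho-1})$, so the total collision probability over $O(u^{-\rho})$ particles is $o(1)$. Conditional independence is then recovered not from disjoint edge sets but from a filtration argument (Lemma~\ref{successlemma}) together with a decoupling step (Lemma~\ref{decoupling}) very similar to your coin toss.

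\textbf{The truncation obstacle is real and you have not resolved it.} Your point (iii) is a genuine difficulty: with the original intensity $\pi$ the BRW has typical total size $u^{-\rho_-}$, so stopping after $O(u^{-\rho})$ particles gives only a vanishing fraction of $I(0,b)$, and the heuristic ``the root alone has $\Theta(u^{-\gamma})$ children in $(\log b,0]$'' is far short of $\varepsilon u^{-\rho}$ since $\gamma<\rho$. The paper sidesteps this entirely by running the BRW with a slightly \emph{reduced} density parameter $\tilde\beta<\beta$, chosen so that the associated Malthusian parameter equals exactly the target exponent~$\rho$. Then the full killed BRW has typical size $u^{-\rho}$, a cap at $\frac{a}{2}u^{-\rho}$ is exceeded only with probability $\le\varepsilon$ (by choice of $a$), and no truncation argument is needed. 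Lemma~\ref{coupling} then shows that this weaker BRW still dominates the graph exploration from below, because the $\tilde\beta$-edge probabilities are bounded by the $\beta$-edge probabilities of $\mathscr G_m$. This intensity-reduction trick is the missing ingredient in your sketch.
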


Proposition~\ref{mainlemma} will be proved in Section~\ref{sec_mainlemma} using Proposition~\ref{twokilled}.
\bigskip

We now complete the proof of Theorem~\ref{2} using Proposition~\ref{mainlemma}. 
Take $o_n\in \mathscr G_n$ so that $$ {o_n\to\infty} \text{ and } { \frac{o_n}{n} \to 0}.$$
We fix $\delta>0$, $b=\frac12$, then $\varepsilon>0$ from Proposition~\ref{mainlemma} and $0<u<u_0$ so that $\frac{2\log \varepsilon}{\log u}<\frac{\delta}2$ and also that \smash{$\varepsilon^2>u^{\rho}$}. Let
$$k=\frac{\log (o_n/n)}{\log u}-1.$$
Then $o_n=u^{k+1}n$ and we set $m:=u^{k}n$. Take $n$ large enough such that $m \geq m(u)$ as defined in Proposition~\ref{mainlemma}. This is possible since $m=o_n/u \rightarrow \infty$ as $n\to\infty$.
\medskip\pagebreak[3]

In the first step we use Proposition~\ref{mainlemma} with  $d=1$ and $u_1=o_n$. We obtain $X_1$ vertices with index $\ge bm$ in the component  $S_m(o_n)$. These vertices constitute the children of the root and therefore the first generation of the embedded Galton Watson tree. Their indices lie in the interval
$(bu^{k}n, u^{k}n]$. In the second step we take these vertices and the set $\mathcal  U$ from the first step as input into 
Proposition~\ref{mainlemma} which we now use with a new, larger $m:=u^{k-1}n$, see Figure~\ref{fig1} for an illustration. Note that $d\leq m^{\rho}$ and the conditions of Proposition~\ref{mainlemma}  are satisfied so that we get a second  generation consisting
of disjoint subsets 
$\mathcal X_1, \ldots \mathcal X_d$ of the connected component of $o_n$ 
in~$\mathscr G_m$. These are the offspring of the $d$ children of the root. 
We continue this procedure for altogether $k$ steps until, in the last step, we reach $m=n$. The number of vertices thus created in the component of $o_n\in\mathscr  G_n$ is the total size of the first $k$ generations of a Galton-Watson tree with offspring variable $X_i$.
\medskip

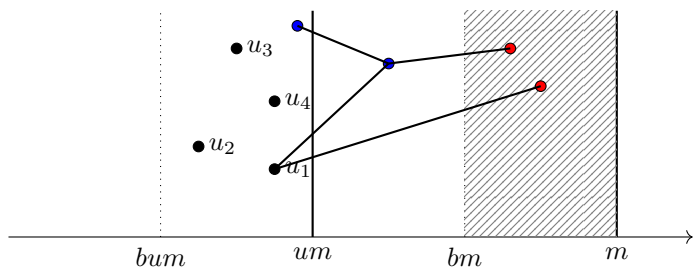
\begin{figure}
    \centering
    \begin{tikzpicture}
                 % Draw x-axis
        \draw[->] (-6,0) -- (3,0) node[below] {};
        
        % Draw vertical lines and labels
        \draw[dotted] (-4,0) -- (-4,3);
        \node[below] at (-4,0) {\small $bum$};
        
        \draw[thick] (-2,0) -- (-2,3);
        \node[below] at (-2,0) {\small $um$};
        
        \draw[dotted] (0,0) -- (0,3);
        \node[below] at (0,0) {\small $bm$};
        
        \draw[thick] (2,0) -- (2,3);
        \node[below] at (2,0) {\small $m$};
        
        % Add shading between log b and 0
        \fill[pattern=north east lines, pattern color=gray] (0,0) rectangle (2,3);
        
        % Add points randomly spread in the region between log u + log b and log u
        \foreach \i/\y in {2/1.2, 3/2.5, 4/1.8} {
            \draw[fill=black] ({-4.5 + \i*0.5},{\y}) circle (2pt);
            \node[right] at ({-4.5 + \i*0.5},{\y}) {\small $u_{\i}$};
        }
        
        % Adjust last point and add smoother wiggly line
        \draw[fill=black] (-2.5,0.9) circle (2pt);
        \node[right] at (-2.5,0.9) {\small $u_1$};
        
        % Tree structure
        \draw[fill=red] (1,2) circle (2pt); % Node in shaded region
        \draw[fill=red] (0.6,2.5) circle (2pt);
        \draw[fill=blue] (-1,2.3) circle (2pt); % Node between log u and log b
        \draw[fill=blue] (-2.2,2.8) circle (2pt); % Node left of log u
        
        \draw[thick] (-2.5,0.9) -- (1,2); % Connect log u node to shaded region node
        \draw[thick] (-2.5,0.9) -- (-1,2.3); % Connect log u + log b node to log u node
        \draw[thick] (-1,2.3) -- (-2.2,2.8); 
        \draw[thick] (-1,2.3) -- (0.6,2.5) ;
        
    \end{tikzpicture}
    \caption{Illustration of Proposition~\ref{mainlemma}: The vertices $u_1, \ldots, u_4$ are successively explored, the exploration of $u_1$ is depicted. The exploration yields particles in the entire interval $[bum,m]$ but only the red particles located in $[bm,m]$ are included in~$\mathcal X_1$. A logarithmic scale is used on the abscissa.\label{fig1}}
\end{figure}

As the mean offspring number is
$$\mathbb E[X_i] =\varepsilon \lceil \varepsilon u^{-\rho}\rceil >1,$$
the Galton-Watson tree is supercritical and survives forever with positive probability.~As 
$$k \sim - \frac{\log n}{\log u} \to\infty ,$$
on survival 
the number of vertices in the $k$th generation is a positive multiple of
\begin{align*}
    (\varepsilon \lceil \varepsilon u^{-\rho}\rceil)^k&= \exp{ -(1+o(1)) \frac{\log n}{\log u}(2\log \varepsilon  -\rho \log u)} \\
    &= \exp{ -(1+o(1)) \log n \Big(\frac{2\log \varepsilon}{\log u}   - \rho \Big)} 
    \geq n^{\rho-\delta},
\end{align*}
for all large $n$. In particular we have
$$S_n(o_n)\ge cn^{\rho-\delta}
 \quad \text{ for all $n$ with \emph{positive} probability.}$$ 
To get the result with \emph{high} probability we need to modify the first step of the construction
and start the Galton-Watson tree not with one but with a large but fixed number $d$ of vertices. We fix $0<b<1$ to be determined later and now let
$$k=\frac{\log (o_n/bn)}{\log u}-1$$
and note that $o_n=bum$ when we again set 
$m:=u^{k}n$. The difference between the degree of $o_n$ at times $um$ and $bum$ is the sum of $(1-b)um$
independent Bernoulli random variables with parameter
bounded from below by $\beta (um)^{\gamma-1}(bum)^{-\gamma}$. As $n\to\infty$ this random variable converges to a Poisson random variable with parameter
$\beta(1-b)b^{-\gamma}$. We can therefore make the probability that this random variable is larger 
than~$d$ arbitrarily close to one by picking a sufficiently small $b$ in our applications of Proposition~\ref{mainlemma}. On this event we can now start the construction with $d$ vertices which are all children of the original~$o_n$ and get $d$ independent supercritical Galton-Watson trees with the given offspring distribution. Observe that the survival of \emph{one} of these $d$ trees suffices to get the lower bound on $S_n(o_n)$
and the complementary probability of extinction of \emph{all} trees can be made arbitrarily small by choice of $d$. This completes the proof.

\subsection{Proof of Proposition~\ref{twokilled}}\label{sec_twokilled}

The idea of the proof is to exploit that, as $\psi(\rho_-)=1$, the process given by $$W_n:=\sum_{\abs{v}=n}e^{-\rho_-V(v)}$$ is a martingale. Since $W_n$ is nonnegative it converges to some limit $W$, which we show to be strictly positive. We then look at this martingale from the point of view of a stopping line, as discussed in~\cite{Kyp}. Theorem~9 in ~\cite{Kyp}
implies convergence as $t\to\infty$  of
$(e^{-\rho_-t} Z_t')$ to $W$, where
\begin{equation}\label{ky}
Z_t':= \sum_{v \in\mathscr V}
\mathbbm{1}_{\{V(v)<t\}}
\sum_{y \colon \cev{y}=v}
e^{-\rho_-(V(y)-t)} \mathbbm{1}_{\{V(y)\geq t\}}.
\end{equation}
Observe that conditional on the $v$ with $V(v)<t$ the inner sums are independent with a distribution depending continuously on $V(v)-t$. 
A result of Nerman~\cite{nerman} therefore gives that 
the inner sum can be replaced by $\mathbbm{1}_{\{t-V(v)\leq \log b\}}$ and we still get convergence to a constant multiple of $W$.
\bigskip

We start the detailed proof by verifying  that the limiting $W$ is strictly positive and satisfies the tail property of \eqref{taily}. By Biggins' theorem for branching random walks, see e.g.~\cite{biggins, lyons}, the martingale limit $W$ is strictly positive if and only if the following two conditions hold, 
\begin{itemize}
    \item[]$(i)\ \psi(\rho_-)-\frac{\rho_-\psi'(\rho_-)}{\psi(\rho_-)}>0 \, , $ \smallskip
    \item[]$(ii)\ \mathbb{E}_1[W_1 \log W_1]<\infty.$\smallskip
\end{itemize}
The first one holds as $\psi(\rho_-)=1$ and $\psi'(\rho_-)<0$. For the second condition it suffices to prove the following lemma.

\begin{lemma}\label{l1}
For $1<p<\frac{1-\gamma}{\rho_-}$ we have
$\mathbb E_1 \big[W_1^p\big]
<\infty.$
\end{lemma}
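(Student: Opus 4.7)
Under $\mathbb P_1$ the first-generation positions of $\mathscr T$ form a Poisson point process $\Pi$ with intensity $\pi$, and $W_1=\sum_{x\in\Pi}e^{-\rho_- x}$. I would split the sum at the origin, writing $W_1 = W_1^+ + W_1^-$ with $W_1^\pm$ collecting the points in $(0,\infty)$, respectively $(-\infty,0]$, and treat the two pieces separately; by $(a+b)^p\le 2^{p-1}(a^p+b^p)$ it is enough to bound the $p$-th moment of each.

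For the positive part every summand lies in $(0,1]$ and the mean $\int_0^\infty e^{-\rho_- x}\beta e^{\gamma x}\,dx = \beta/(\rho_- -\gamma)$ is finite because $\rho_->\gamma$. Using Campbell's formula $\mathbb E[\exp(\lambda W_1^+)]=\exp\bigl(\int_0^\infty(e^{\lambda e^{-\rho_- x}}-1)\,\beta e^{\gamma x}\,dx\bigr)$ together with the chord bound $e^{\lambda t}-1\le t(e^\lambda-1)$ valid on $[0,1]$, I would deduce $\mathbb E[\exp(\lambda W_1^+)]\le \exp\bigl(\beta(e^\lambda-1)/(\rho_- -\gamma)\bigr)<\infty$ for every $\lambda>0$. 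Hence $W_1^+$ has exponential moments of all orders and contributes no constraint on $p$.

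The binding contribution is $W_1^-$. Because $\pi((-\infty,0])=\beta/(1-\gamma)$ is finite, $W_1^-=\sum_{i=1}^N Y_i$ is a genuine compound Poisson sum with $N\sim\mathrm{Poisson}(\beta/(1-\gamma))$ and i.i.d.\ $Y_i=e^{-\rho_- X_i}$, where $X_i$ has the normalised density $(1-\gamma)e^{(1-\gamma)x}$ on $(-\infty,0]$. A direct change of variables gives the exact Pareto tail $\mathbb P(Y_i>y)=y^{-(1-\gamma)/\rho_-}$ for $y\ge 1$, so $\mathbb E[Y_1^p]<\infty$ precisely under the hypothesis $p<(1-\gamma)/\rho_-$. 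Since $N$ has moments of every order, conditioning on $N$ and using $\bigl(\sum_{i=1}^n Y_i\bigr)^p\le n^p\sum_i Y_i^p$ yields $\mathbb E[(W_1^-)^p]\le \mathbb E[Y_1^p]\,\mathbb E[N^{p+1}]<\infty$. The main obstacle is exactly this last step: the Pareto tail of a single far-negative Poisson point is what fixes the admissible range of $p$, and no refinement of the positive-part argument can enlarge it. Combining the two bounds finishes the proof.
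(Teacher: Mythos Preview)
Your proof is correct and genuinely different from the paper's. The paper applies Mecke's equation to write $\mathbb E_1[W_1^p]=\int e^{-\rho_- x}\,\mathbb E\big[(e^{-\rho_- x}+W_1)^{p-1}\big]\,\pi(dx)$, bounds this by $2^{p-1}\big(\psi(p\rho_-)+\mathbb E_1[W_1^{p-1}]\big)$, and then iterates in~$p$ down to the range $1<p\le 2$ where Jensen closes the argument. Your approach instead splits the Poisson process at the origin: on the positive half-line the summands are bounded by~$1$ and Campbell's formula with the chord bound gives exponential moments for free, while on the negative half-line the total intensity is finite, so you obtain a genuine compound Poisson sum whose atoms have the exact Pareto tail $y^{-(1-\gamma)/\rho_-}$. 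This is more elementary (no Mecke, no induction in~$p$) and it makes the origin of the constraint $p<(1-\gamma)/\rho_-$ completely transparent: it is exactly the integrability threshold of a single summand $e^{-\rho_- X}$ with $X$ distributed according to the normalised left tail of~$\pi$. The paper's route, on the other hand, is a one-line reduction that would adapt verbatim to more general Poisson functionals where no natural splitting point exists.
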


\begin{proof}
We define 
$$f(x,\Pi)= e^{-\rho_- V(x)} (\sum_{y\in\Pi}e^{-\rho_- V(y)})^{p-1}\, .$$ 
%= e^{-\rho_- x} (\int e^{-\rho_- t} d\Pi)^{\alpha-1}  
Then $\mathbb{E}_1[W_1^p]=\mathbb{E}[\int f(x,\Pi) \, \Pi(dx)]$ and by Mecke's equation~\cite[Theorem 4.1]{PPP} we get
\begin{align*}
    \mathbb{E}_1[W_1^p]&= \int \mathbb{E}[f(x,\Pi+\delta_x)] \, \pi(dx) = \int e^{-\rho_-x}\mathbb{E}\Big[\big(e^{-\rho_-x}+\int e^{-\rho_- t} \, \Pi(dt) \big)^{p-1}\Big] \pi(dx)\\
    &\leq 2^{p-1} \Big(\int e^{-p \rho_-x} \pi(dx) + \mathbb{E}_1\big[ W_1^{p-1}\big] \, \psi(\rho_-) \Big)\, .
\end{align*}
The left summand is equal to $\psi(p\rho_-)$ which is finite for $1<p<\frac{1-\gamma}{\rho_-}$.
The right summand is finite if $1<p\leq 2$ because in this case, by Jensen's inequality, the expectation is bounded by one. If $p>2$ we iterate the argument, using the same bound but now with \smash{$1<p-1<\frac{1-\gamma}{\rho_-}$}. In each iteration the exponent is reduced by one until it is no larger than two.
\end{proof}

Biggins' theorem ensures not only that $W>0$ on survival of $\mathscr T$ but also that $W_n\to W$ in $L^1$.  By the next lemma we can improve this to convergence in $L^p$ for
$p<\rho_+/\rho_-$.

\begin{lemma}\label{l2}
For $1<p<\rho_+/\rho_-$ we have that
$\displaystyle\sup_{n\in\mathbb N} \mathbb E_1 \big[W_n^p\big]
< \infty$ and $W_n\to W$ in $L^p$.
\end{lemma}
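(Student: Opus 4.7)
The plan is to prove uniform $L^p$-boundedness $\sup_n \mathbb E_1[W_n^p]<\infty$; combined with the a.s.\ martingale convergence $W_n \to W$, this yields $L^p$-convergence via uniform integrability of $(W_n^p)_n$. Throughout the argument the only condition on $\psi$ that appears is $\psi(p\rho_-)<1$, which is automatic in the range $1<p<\rho_+/\rho_-$ because $\psi$ is convex on $(\gamma,1-\gamma)$ with $\psi(\rho_-)=\psi(\rho_+)=1$.

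The starting point is the marked Mecke recursion used in the proof of Lemma~\ref{l1}. Decomposing $W_n = \sum_i e^{-\rho_- V(x_i)}W_{n-1}^{(i)}$ at the first generation, the marked Mecke formula yields
$$\mathbb E_1[W_n^p] = \int\int e^{-\rho_- v}\,m\;\mathbb E\big[(W_n + e^{-\rho_- v}m)^{p-1}\big]\,\pi(dv)\,K(dm),$$
with $K$ the law of $W_{n-1}$. For $1<p\leq 2$, the sharp subadditivity $(a+b)^{p-1}\leq a^{p-1}+b^{p-1}$ (improving on the crude bound $2^{p-1}(a^{p-1}+b^{p-1})$ used in Lemma~\ref{l1}), together with Jensen $\mathbb E[W_n^{p-1}]\leq 1$ and $\psi(\rho_-)=1$, reduces this to
$$\mathbb E_1[W_n^p] \leq 1 + \psi(p\rho_-)\,\mathbb E_1[W_{n-1}^p],$$
with fixed point $1/(1-\psi(p\rho_-))<\infty$.

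For integer $p=k\geq 3$, subadditivity must be replaced by the binomial expansion of $(a+b)^{k-1}$; integration against Mecke gives
$$\mathbb E_1[W_n^k] = \sum_{j=0}^{k-1}\binom{k-1}{j}\psi((j+1)\rho_-)\,\mathbb E_1[W_{n-1}^{j+1}]\,\mathbb E_1[W_n^{k-1-j}].$$
The only summand involving $\mathbb E_1[W_n^k]$ or $\mathbb E_1[W_{n-1}^k]$ is $j=k-1$, contributing $\psi(k\rho_-)\mathbb E_1[W_{n-1}^k]$; the remaining summands involve strictly lower moments, uniformly bounded by induction on $k$. Since $\psi(k\rho_-)<1$ for $k<\rho_+/\rho_-$, the recursion contracts to a uniform bound.

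For non-integer $p\in(k,k+1)$ with $k\geq 2$ and $k+1<\rho_+/\rho_-$, Lyapunov's inequality $\|W_n\|_p \leq \|W_n\|_k^\alpha \|W_n\|_{k+1}^{1-\alpha}$ closes the argument. When $\lceil p\rceil \geq \rho_+/\rho_-$, one interpolates instead between $\|W_n\|_k$ and $\|W_n\|_{p'}$ for some $p'\in(p,\rho_+/\rho_-)$, bounding $\|W_n\|_{p'}$ by a refined Taylor-type expansion $(a+b)^{p'-1}\leq a^{p'-1}+(p'-1)\,b\,(a+b)^{p'-2}$ applied to the Mecke recursion. The main obstacle is handling exponents $p$ arbitrarily close to the boundary $\rho_+/\rho_-$: the contraction ratio $\psi(p\rho_-)$ approaches $1$ and the uniform bound $1/(1-\psi(p\rho_-))$ deteriorates, but it remains finite for every fixed $p<\rho_+/\rho_-$, which is precisely the claim.
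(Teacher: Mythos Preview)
The paper's proof is a one-line citation of Proposition~2.1 in Iksanov, checking the two hypotheses $\mathbb E_1[W_1^p]<\infty$ (supplied by Lemma~\ref{l1}) and $\psi(p\rho_-)<\psi(\rho_-)^p=1$. Your direct approach via the marked Mecke recursion is different and more hands-on; the case $1<p\le2$ is correct and elegant (the sharp subadditivity $(a+b)^{p-1}\le a^{p-1}+b^{p-1}$ is exactly what is needed to get contraction rate $\psi(p\rho_-)$ rather than $2^{p-1}\psi(p\rho_-)$), and your binomial identity plus induction on the integer exponent $k\ge3$ is also valid.

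There is, however, a genuine gap in the non-integer range $k<p<\rho_+/\rho_-\le k+1$ with $k\ge2$. Interpolating against a larger non-integer $p'\in(p,\rho_+/\rho_-)$ is circular unless $\mathbb E_1[W_n^{p'}]$ can be bounded independently, and your Taylor-type inequality does not achieve this: feeding $(a+b)^{p'-1}\le a^{p'-1}+(p'-1)b(a+b)^{p'-2}$ into Mecke and iterating until the exponent drops below~$1$ produces
\[
\mathbb E_1[W_n^{p'}]\ \le\ C\ +\ c_{p'}\,\psi(p'\rho_-)\,\mathbb E_1[W_{n-1}^{p'}],\qquad c_{p'}=(p'-1)(p'-2)\cdots(p'-k+1)>1.
\]
For $p'$ near $\rho_+/\rho_-$ one has $\psi(p'\rho_-)\uparrow1$, so $c_{p'}\psi(p'\rho_-)>1$ and the recursion does not contract (e.g.\ $\rho_+/\rho_-=5/2$, $p'=2.4$ gives coefficient about $1.4$). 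This range is precisely what is required to extract the tail exponent $\rho_+/\rho_-$ for $W$ via Markov's inequality, so the defect is not cosmetic. The reason the cited result succeeds is that it controls the martingale increments $W_{n+1}-W_n=\sum_{|x|=1}e^{-\rho_-V(x)}(W_n^{(x)}-W_{n-1}^{(x)})$ through a Burkholder/Topchi\u{\i}--Vatutin type inequality rather than bounding $W_n^p$ through Mecke; this keeps the contraction rate at $\psi(p\rho_-)$ without any combinatorial prefactor.
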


\begin{proof}
By 
%Theorem~1 in \cite{biggins2}, see also
Proposition~2.1 in \cite{Iksanov}
we get that $(W_n)$ converges in $L^p$ and that $\mathbb E_1[W_n^p]$ is bounded if
$$ \mathbb E_1 \big[W_1^p\big]
<\infty \text{ and } \psi(p\rho_-) < \psi(\rho_-)^p.$$
The first condition is verified under the weaker condition \smash{$1<p<\frac{1-\gamma}{\rho_-}$} in Lemma~\ref{l1}. As
$\psi(\rho_-)=1$ the second condition becomes
$\psi(p\rho_-) <1$, which holds if $p<\rho_+/\rho_-$.
\end{proof}

The tail behaviour of $W$ (and later of $Y$) claimed in Proposition~\ref{twokilled} now follows directly from
Lemma~\ref{l2} by Markov's inequality.
\bigskip

  As in
  \cite[Proposition~8]{morters_et_al:LIPIcs.AofA.2024.14} our next aim is to rewrite 
  $\mathscr T_{0,1/u}$ started at the origin 
  in terms of a sum over characteristics of the individuals in the population at time $t=-\log u$ of a  general (Crump-Mode-Jagers) branching process. In a general branching process the location of all offspring is to the right of the parent and locations are interpreted as birth-times of offspring particles.
  \medskip
    
   \begin{figure}[ht]
    \centering
\begin{tikzpicture}
    % Draw axes
    \draw[->] (-3,0) -- (4,0); % X-axis
    \draw[->] (0,-1) -- (0,4); % Y-axis
    \draw[->] (0,-1) -- (4,-1);
    
    % Main tree (blue!50!black lines, blue!50!black and red vertices)
    \foreach \x/\y in {-2/2, -1/1, 0/0} {
        \fill[blue!50!black] (\x,\y) circle (2pt);
    }
    \foreach \x/\y in {1/1, 2/1, 3/1, 0.3/2, 1.2/2, , 0.1/3, 1.7/3} {
        \fill[red] (\x,\y) circle (2pt);
    }
    \draw[blue!50!black, thick] (-2,2) -- (-1,1) -- (0,0);
    
    % Additional branches
    \foreach \x/\y in {-1.5/2, -0.8/3, -0.5/1} {
        \fill[blue!50!black] (\x,\y) circle (2pt);
    }
    \draw[blue!50!black, thick] (-1,1) -- (-1.5,2);
    \draw[blue!50!black, thick] (-2,2) -- (-0.8,3);
    \draw[blue!50!black, thick] (0,0) -- (-0.5,1);
   \draw[blue!50!black, thick] (0,0) -- (1,1);
   \draw[blue!50!black, thick] (0,0) -- (2,1);
   \draw[blue!50!black, thick] (0,0) -- (3,1);
    \draw[blue!50!black, thick] (-0.5,1) -- (0.3,2);

    \draw[blue!50!black, thick] (-1.5,2) -- (0.1,3);
    \draw[blue!50!black, thick] (-1.5,2) -- (1.7,3);

    \draw[blue!50!black, thick] (-0.5,1) -- (1.2,2);
    
    % Projection (red) only for right side
    \foreach \x/\y in {1/1, 2/1, 3/1, 0.3/2, 1.2/2, , 0.1/3, 1.7/3} {
        \fill[red] (\x,-1) circle (2pt);
        \draw[red, thin] (\x,-1) -- (\x,\y);
    }
    
    % Labels
    \node at (-0.3,-0.3) {$0$};
    \node at (4,-1.3) {$\xi$};
    
\end{tikzpicture}
\caption{Branching particles are  marked in blue. The positions on $[0,\infty)$ of the frozen particles, which are marked in red,  yield the point process~$\xi$.\label{f2}}
\end{figure}
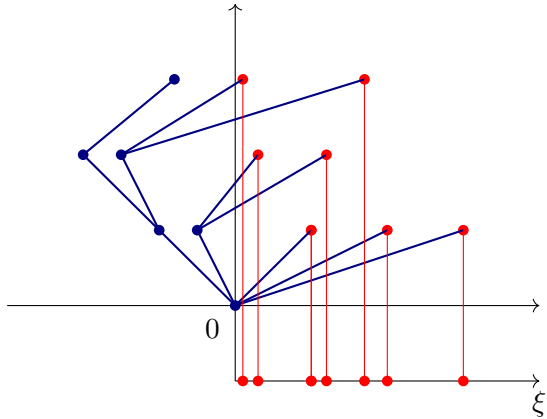

 To this end we divide the offspring of a particle $v=(v_1,\ldots, v_n)$ at location $V(v)$ into \emph{branching} particles to its left, and \emph{frozen} particles to its right.  
  The offspring of the branching particles is again divided into branching particles to the left of $V(v)$ and frozen particles to its right, until (after a finite number of steps) the offspring of all branching particles has been divided into branching
  and frozen particles. The frozen particles are all located to the right of $V(v)$, they constitute the offspring process of $v$ in the general branching process. 
  Their relative positions form a point process 
$$\xi_v=\sum_{{w\in \mathscr V, |w|>n}\atop{(w_1,\ldots, w_n) = (v_1,\ldots, v_n)}}
\delta_{V(w)-V(v)}
\mathbbm{1}_{\{V(w)> V(v) ,  V(w_1,\ldots,w_i)\leq  V(v) \forall n\leq i<|w|\}},$$ 
and they are all copies of the point process $\xi$ depicted in  Figure~\ref{f2}. The branching particles form a set $\mathscr B_v$
and their locations are all to the left of $V(v)$. \pagebreak[3]\medskip

To construct the general branching process we start with the root located at the origin,  considered initially to be frozen, take the point process $\xi_\varnothing$ of frozen particles as birth times of the children of the root and apply the same  procedure to every child $v$ of the root. The processes $\xi_v$ and cardinalities $|\mathscr B_v|$
are independent and identically distributed
over all the frozen particles $v$. The total number 
of particles in $\mathscr T_{0,1/u}$ equals
$$\sum_{{v \text{ frozen}}\atop{V(v) \leq t}} (1+ |\mathscr B_v|),$$
where $t=-\log u$.
To obtain convergence of this  quantity (properly scaled) we need to find the 
Malthusian parameter $\alpha>0$ associated to $\xi$, defined by
$$\mathbb E \int_0^\infty e^{-\alpha t} \xi(dt)=1.
$$
We now show that $\rho_-$ is the Malthusian parameter associated to $\xi$. To this end we construct a martingale $(M_n)$
as follows: We start with a particle at the origin and $M_0=1$. In every step 
we replace the leftmost particle 
% in (\log u + \log b,0]$ 
by its offspring chosen with displacements according to a Poisson process of intensity~${\pi}$ and leave all other particles alive. Particles in $(0,\infty)$ never branch and remain alive but frozen. 
If the leftmost particle is in $(0,\infty)$ the process stops and %exactly the particles of $\xi$ are alive. 
the positions of the frozen particles make up~$\xi$. The random variable
$M_n$ is obtained as the sum of all particles~$x$ alive  after the $n$th step weighted with $e^{-\rho_- V(x)}$. Because $\psi(\rho_-)=1$ the process $(M_n)$ is indeed a martingale, and it clearly converges almost surely to $$M_\infty=\int_0^\infty \e^{-\rho_- t} \xi(\dd{t}).$$ Now take $\alpha>\rho_-$ with $\psi(\alpha)<1$. Then $M_n$ is dominated by
\begin{align*}
    M_n \leq \sum_{u \text{ branching}} e^{- \alpha V(u)} + \sum_{u \text{ frozen}}e^{-\rho_-V(u)}
\end{align*}
 The right-hand side is integrable, as the sum over frozen particles born from a single particle $x$ in position $V(x)<0$ has expectation at most $e^{-\alpha V(x)}$ and the expected sum over these bounds for all branching particles is itself bounded by \smash{$\frac1{1-\psi(\alpha)}$}. By dominated convergence, we get that 
 $\mathbb E[M_\infty]=1$ and hence
 $\rho_-$ is the Malthusian parameter.
\medskip\pagebreak[3]

Theorem~3.1 in Nerman~\cite{nerman} yields 
convergence of $(e^{-\rho_- t} Z_t^\phi)$ to 
a positive random variable $m_\phi Z$ for
\begin{align*}
        Z_t^\phi:=\sum_{v\colon V(v) \leq t}\phi_v(t-V(v)),
    \end{align*}
where the sum is over the particles of the general branching process born before time~$t$
and the characteristics $\phi_v$  are 
independent, identically distributed copies of a random function  $\phi\colon [0,\infty) \to [0,\infty)$ satisfying mild technical conditions. Moreover, $Z$ is a positive random variable independent of $\phi$ and $m_\phi$ a positive constant depending on~$\phi$. The conditions of~\cite{nerman} are satisfied for the process $(Z_t')$ in~\eqref{ky} by \cite[Corollary 2.5]{nerman}, whence $Z$ is a constant multiple of $W$, but also when the processes $(\phi(s) \colon s\ge 0)$ are  
bounded  by an integrable random variable and
$\mathbb E\phi \colon [0,\infty) \to [0,\infty)$ is continuous. 
%given as \begin{align*}\phi_v(t)=\mathbbm{1}_{(0,-\log b)(t)} X_v     \end{align*}. 
\medskip

We now look at the total number $I(0,b)$ of surviving particles of $\mathscr T_{0,1}(\log u)$ located in $(\log b,0]$.
%$\mathscr T_{ub,1}(\log u)$ 
 We shift all particle positions by $t = -\log u$. Then the killed branching random walk $ \mathscr T_{0,1}(\log u)$ becomes a killed branching random walk $\mathscr T_{0,1/u}(0)$ and $I(0,b)$ the number of surviving particles in $(t+\log b,t]$. \medskip
 
 We have $I(0,b)=Z_t^\phi$ for the general branching process with offspring law $\xi$ at the time $t = -\log u$ and for the characteristic 
 $$\phi_v(s)= \sum_{w \in \mathscr B_v}
 %\text{ branching}}
 \mathbbm{1}_{\{s+\log b< V_v(w) \leq s\}},$$ 
 where $V_v(w)$ is the relative position of the branching particle~$w$ to $v$. Then $\phi_v(t-V(v))$ is the number of branching particles descending from $v$ (including $v$ itself) located in the interval
 $(t+\log b,t]$. This process is dominated by
 $1+|\mathscr B_v|$. To check that $|\mathscr B_\varnothing|$ is integrable, fix $\alpha>0$ with $\psi(\alpha)<1$. Then we have for $v\in\mathscr B_\varnothing $ that $e^{-\alpha V(v)} \geq 1$ and $$\mathbb{E} \sum_{{v\in\mathscr B_\varnothing}\atop{\abs{v}=n}} e^{-\alpha V(v)} \leq \psi(\alpha)^n.$$ Hence
$\mathbb{E}[|\mathscr B_v|]\leq  
\sum_n \psi(\alpha)^n
\leq \frac{1}{1-\psi(\alpha)} < \infty.$ As
$\mathbb{E} \phi_v$ is clearly continuous the conditions of~\cite[Theorem~3.1]{nerman} on the characteristics are satisfied.
 \medskip   
 
Altogether this yields that 
$$\lim_{u\downarrow 0} u^{\rho_-}I(0,b)= 
\lim_{t\uparrow \infty}
e^{-\rho_- t} Z_t^\phi = Y \quad\text{ in distribution,}$$ 
where  the limit $Y$ is a positive, constant multiple of the positive martingale limit $W$. 
    \smallskip

\subsection{Proof of Proposition~\ref{mainlemma}}\label{sec_mainlemma}

Under the assumption~$0<\beta <\frac{1}{4}-\frac{\gamma}{2}$ the leftmost particle of $\mathscr T$ drifts to the right, i.e.\ 
$\lim_{n\to\infty} \inf_{|v|=n} V(v)=\infty $, see~\cite[Lemma~3.1]{shi}. Hence
$\inf_{v\in\mathscr V} V(v)$ is a finite random variable
and it is easy to see that in our case its support is the entire negative half-line. Hence,
given $0<b<1$, we can pick $\varepsilon>0$ such that
$$\mathbb P_1\big( \inf_{v\in\mathscr V} V(v) > \log b\big)  \geq \varepsilon.$$
Additionally we request that, for $Y$ as in Proposition~\ref{twokilled}, $\varepsilon>0$ satisfies
$\mathbb{P} \left( Y\geq \varepsilon  \right)
\geq 5\varepsilon.$\pagebreak[3]
This implies that
$$\liminf_{u \downarrow0} \inf_{u'\in[ub,u]} \mathbb{P}_{u'} \left( I(ub,b) \geq \varepsilon u^{-\rho_-}  \right) 
\geq  \liminf_{u \downarrow0} \mathbb{P}_u \left( I(0,b) \geq \varepsilon u^{-\rho_-}  \right) - \varepsilon \geq
4 \varepsilon$$
and, for suitably large $a>1$,
$$\limsup_{u \downarrow0} \sup_{u'\in[ub,u]} \mathbb{P}_{u'} ( I(ub,0) \geq 
a u^{-\rho_-}) \leq \limsup_{u \downarrow0} \mathbb{P}_{ub} ( I(0,0) \geq 
a u^{-\rho_-}) \leq \tfrac{\varepsilon}2.$$
We pick $0<u_0<b \wedge 2^{-1/\rho_-}$  
such that $\inf_{u'\in[ub,u]} \mathbb{P}_{u'} ( I(ub,b) \geq \varepsilon u^{-\rho_-}) 
\geq 3\varepsilon$ and also $a>1$  such that
$\sup_{u'\in[ub,u]} \mathbb{P}_{u'} ( I(ub,0) \geq 
\frac{a}2 u^{-\rho_-}) \leq \varepsilon$, for all $0<u<u_0$. The exploration algorithm below uses $\varepsilon, a, \rho_-$ and $u_0$ 
as derived above from the parameter $\pi$.%
\bigskip%

\noindent
We present, for parameters $(\pi, u, m)$ %$0<u<u_0$ and 
with $m\in\mathbb N$, 
an exploration algorithm with~input
\begin{itemize}
    \item a graph $\mathscr U'\subset \{1,\ldots, um\}$ with at most $a m^{\rho_-}$ vertices,
     \item distinct vertices  $u_1 < \ldots < u_d$ in $\mathscr U'$ with $bum<u_i\leq um$ and $d\leq m^{\rho_-}$.
\end{itemize}
The output of the algorithm are
\begin{itemize}
    \item a family of pairwise disjoint sets $\mathcal Y_1, \ldots, \mathcal Y_d \subset\{bm,\ldots,m\}$,
    \item a graph $\mathscr U \subset \{1,\ldots, m\}$ with at most $a (\frac{m}u)^{\rho_-}$ vertices such that 
    $\mathscr U'$ is an embedded subgraph and the sets $\mathcal Y_i$ are contained in the connected component of $u_i$ in~$\mathscr U$.
\end{itemize}

\begin{algorithm}[h]
\caption{Branching Random Walk Exploration~$(\pi,u,m)$}
\begin{algorithmic}[1]
\State Set $\mathscr U :=\mathscr U'$ and $i:=1$.
\While{$i \leq d$}
    \State Set $\mathcal B_i :=\emptyset$.
    \State Sample a killed branching random walk $\mathscr T_{ub,1}$ with intensity ${\pi}$ and start in~$\phi_m(u_i)$.
%    \State {\color{red}Find the smallest neighbour $\pi_m(\tilde{u}_i)$ of $\pi_m(u_i)$ with index greater than $\pi_m(u)$.}
%    \If{$\tilde{u}_i \geq \pi_m(ua)$}         
%\State Set $\mathcal{Y}_i = \emptyset$ and \textbf{STOP}
%    \EndIf
    \For{all particles $\varnothing\not=v\in\mathscr T_{ub,1}$  in a depth-first exploration}
        \If{$\pi_m(V(v)) \in \mathscr{U}$} \label{error 1}
            \State Output $\mathcal{Y}_i = \emptyset$, set $i:=i+1$ and \textbf{end while}
        \EndIf
        \State Add vertex $\pi_m(V(v))$ and the explored edge leading to it to the graph $\mathscr{U}$
        \State Add $\pi_m(V(v))$ to the set $\mathcal{B}_i$
\If{$|\mathcal{B}_i|\ge \frac{a}2 u^{-\rho_-}$} \label{error 2}
            \State Output $\mathcal{Y}_i = \emptyset$, set $i:=i+1$ and \textbf{end while}
        \EndIf        
        \If{$V(v) \in [\log b, 0]$}
            \State Add $\pi_m(V(v))$ to $\mathcal{Y}_i$
        \EndIf
        \If{$|\mathcal{Y}_i| \geq \varepsilon u^{-\rho_-}$}
            \State Output $\mathcal{Y}_i$, set $i:=i+1$ and \textbf{end while}
        \EndIf
    \EndFor
 %   \If{the exploration is stopped without \textbf{SUCCESS$(i)$}}
        \State Output $\mathcal{Y}_i = \emptyset$, set $i:=i+1$. \label{error 3}
        \EndWhile
%    \EndIf
\State Output $\mathscr U$.
\end{algorithmic}
\end{algorithm}

By construction the output sets 
$\mathcal Y_1, \ldots, \mathcal Y_d$ are pairwise disjoint and $u_i$ is connected to~$\mathcal Y_i$
by edges in $\mathscr U$. Also, for every $i\in\{1,\ldots,d\}$ the algorithm adds at most $\frac{a}2u^{-\rho_-}+1$ vertices to the graph~$\mathscr U$, so that its output $\mathscr U$ satisfies
\begin{align*}
|\mathscr U| & \leq |\mathscr U'|+
d\big(\tfrac{a}2u^{-\rho_-}+1\big) \leq 
a (m/u)^{\rho_-} \big( u^{\rho_-} +\tfrac12 + \tfrac1a u^{\rho_-}\big) \leq a (m/u)^{\rho_-},
\end{align*}
for all $0<u<u_0$ by choice of $u_0$. In the following we show how the algorithm can be used to construct a suitably large subgraph of $\mathscr G_m$.
\bigskip

%\pagebreak[3]

We run the algorithm with parameter $(\tilde\pi,u,m)$ for an intensity measure with a slightly decreased
density parameter $0<\tilde\beta<\beta$, $0<u<u_0$ and some large~$m$. This leads to a slightly smaller value of $\rho_-$ which is referred to as $\rho$ in the statement of Proposition~\ref{mainlemma}. The next lemma shows that the probability of edges inserted by the algorithm is bounded from above by the edge probabilities in~$\mathscr G_m$.

\begin{lemma}\label{coupling}
There exists $m(u)\in\mathbb N$ such that, for all  $m\ge m(u)$, for all $m \ge s,r \geq bum$ with $s \not=r$ the probability that a particle~$v$ in location $V(v)$ with $\pi_m(V(v))=r$ has an offspring~$y$ with location $V(y)$ satisfying $\pi_m(V(y))=s$
is at most
$$\beta (r\wedge s)^{-\gamma} (r\vee s)^{\gamma-1}.$$
\end{lemma}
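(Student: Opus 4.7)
The plan is to bound the required probability by the expected number of offspring of $v$ in the right location, which equals the $\tilde\pi$-measure of the displacement interval $(\phi_m(s-1)-V(v),\phi_m(s)-V(v)]$, using that $1-e^{-\lambda}\le\lambda$ for a Poisson variable with mean $\lambda$. The estimate then reduces to an elementary bound on this $\tilde\pi$-measure.

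I would split into two cases according to the sign of $s-r$. If $s>r$ then $V(v)\le\phi_m(r)\le\phi_m(s-1)$, so the displacement interval lies in $[0,\infty)$ and only the right-hand piece $\tilde\beta e^{\gamma y}\,dy$ of $\tilde\pi$ contributes. Direct integration combined with the mean value theorem (using $\phi_m(s)-\phi_m(s-1)=1/s$) yields
\[
\tilde\pi\bigl((\phi_m(s-1)-V(v),\phi_m(s)-V(v)]\bigr)=\tilde\beta\,e^{-\gamma V(v)}e^{\gamma\xi}\cdot\tfrac{1}{s}
\]
for some $\xi\in(\phi_m(s-1),\phi_m(s))$. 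The case $s<r$ is symmetric, using the left-hand piece $\tilde\beta e^{(1-\gamma)y}\,dy$ and producing the analogous expression with $\gamma$ replaced by $1-\gamma$.

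Next I would translate the exponentials of $\phi_m$-values into powers of $r/m$ and $s/m$ via the harmonic approximation $\sum_{k=i+1}^{m}\tfrac{1}{k}=\log(m/i)+O(1/i)$, combined with the bracket $V(v)\in(\phi_m(r-1),\phi_m(r)]$. In the case $s>r$ this produces the upper bound $\tilde\beta\,r^{-\gamma}s^{\gamma-1}\cdot\bigl(1+O(1/(bum))\bigr)$, and symmetrically $\tilde\beta\,s^{-\gamma}r^{\gamma-1}\cdot\bigl(1+O(1/(bum))\bigr)$ in the case $s<r$. Both match $\tilde\beta(r\wedge s)^{-\gamma}(r\vee s)^{\gamma-1}$ up to a factor tending to $1$ uniformly on $r,s\ge bum$ as $m\to\infty$.

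The only (mild) obstacle is absorbing this error factor with the correct constant. Because the algorithm is run with the strictly smaller parameter $\tilde\beta<\beta$, the gap $\beta-\tilde\beta>0$ absorbs the multiplicative factor $1+O(1/(bum))$ once $m\ge m(u)$ is chosen sufficiently large (depending on $u$ and $b$), yielding the claimed bound by $\beta(r\wedge s)^{-\gamma}(r\vee s)^{\gamma-1}$.
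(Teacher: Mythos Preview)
The proposal is correct and follows essentially the same route as the paper: both bound the probability by the $\tilde\pi$-measure of the relevant displacement interval via $1-e^{-\lambda}\le\lambda$, split into the two cases $s<r$ and $s>r$ to isolate the relevant piece of $\tilde\pi$, estimate the resulting integral using the harmonic approximation for $\phi_m$, and finally absorb the $1+O(1/(bum))$ error factor using the strict inequality $\tilde\beta<\beta$. The only cosmetic differences are that the paper first fixes $V(v)$ at its extremal value $-\sum_{k=r}^m 1/k$ before integrating, whereas you keep $V(v)$ generic and bound at the end, and that you evaluate the integral via the mean value theorem rather than by direct antidifferentiation.
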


\begin{proof} 
For a fixed particle~$v$ in location $V(v)$ with $\pi_m(V(v))=r$ the probability that it  has an offspring~$y$ with location $V(y)$ satisfying $\pi_m(V(y))=s$ equals
\begin{equation}\label{novert1}
1-\exp\bigg(-\tilde\pi\Big( -\sum_{k=s}^m \frac1k -V(v), -\sum_{k=s+1}^m \frac1k-V(v)\Big]\bigg).
\end{equation}
As $\pi_m(V(v))=r$ we have
$$-\sum_{k=r}^m \frac1k < V(v) \leq -\sum_{k=r+1}^m \frac1k.$$
The probability in \eqref{novert1} is therefore largest when $V(v)=-\sum_{k=r}^m \frac1k$. It therefore remains to show that, for $bum\leq s<r$, we have
\begin{equation}\label{a01}
1-\exp\bigg(-\tilde\pi\Big( -\sum_{k=s}^{r-1} \frac1k, -\sum_{k=s+1}^{r-1}\frac1k\Big]\bigg)
\leq \beta s^{-\gamma}r^{\gamma-1},
\end{equation}
and, for $bum\leq r<s$, we have
\begin{equation}\label{a02}
1-\exp\bigg(-\tilde\pi\Big(\sum_{k=r}^{s-1} \frac1k, \sum_{k=r}^{s} \frac1k\Big]\bigg)
\leq \beta s^{\gamma-1}r^{-\gamma}.
\end{equation}
For \eqref{a01} we find that,
for some constant $C>0$, if $m\geq m(u)$ for a suitable $m(u)\in\mathbb N$,
\begin{align*}
    \tilde\pi\Big( -\sum_{k=s}^{r-1} \frac1k, -\sum_{k=s+1}^{r-1}\frac1k\Big] & = \frac{\tilde\beta}{1-\gamma}\, \exp\Big({-(1-\gamma)\sum_{k=s}^{r-1} \frac1k} \Big)(e^{\frac{1-\gamma}{s+1}}-1)\\
    & \leq  \Big(\frac{\tilde\beta}{s+1}+ \frac{C}{(bum)^2}\Big)\, \exp\big(-(1-\gamma)
    (\log(\tfrac{r-1}{s-1}) - \tfrac{C}{bum}) \big)\\[2mm]
    & \leq  \beta  s^{-\gamma}r^{\gamma-1}.
\end{align*}
Hence, using that $1-e^{-x}\le x$, we get \eqref{a01}.
\pagebreak[3]\medskip

For \eqref{a02} we find that,
for some constant $C>0$, if $m\geq m(u)$ for a suitable $m(u)\in\mathbb N$,
\begin{align*}
    \tilde\pi\Big( \sum_{k=r}^{s-1} \frac1k, \sum_{k=r}^{s}\frac1k\Big] & = \frac{\tilde\beta}{\gamma}\exp\Big({\gamma\sum_{k=r}^{s-1} \frac1k} \Big)(e^{\frac{\gamma}{s}-1})\\
    & \leq  \Big(\frac{\tilde\beta}{s}+\frac{C}{(bum)^2}\Big)\, \exp\Big(\gamma
    (\log(\tfrac{s-1}{r-1}) - \tfrac{C}{bum}) \Big)\\[2mm]
    & \leq  \beta  s^{\gamma-1}r^{-\gamma}.
\end{align*}
Hence, using that $1-e^{-x}\le x$, we get \eqref{a02}.
\end{proof}

Let $E_i$ be the event that the exploration of $u_i$ was successful. This is the case if $\mathcal Y_i\not=\emptyset$ or, equivalently, $\abs{\mathcal Y_i}\geq \varepsilon u^{-\rho_-}$. Let
$\mathscr U_i$ be the graph in Algorithm~1 at the time
when the exploration of $u_i$ is completed and
$(\mathscr F_{\mathscr U_i} \colon i=0,\ldots,d)$ the natural filtration associated with this process. Note that 
$$\mathscr U'=:\mathscr U_0 \subset \mathscr U_1 \subset \ldots \subset \mathscr U_d=\mathscr U,$$
and that $E_i\in\mathscr F_{\mathscr U_i}$ for all $i\in\{1,\ldots,d\}$.
\bigskip

\begin{lemma}\label{successlemma}
For $0<u <u_0$ there exists $m(u)\in\mathbb N$ such that, for all $m \geq m(u)$, almost surely,
$$\mathbb P(E_{i+1} \mid \mathscr F_{\mathscr U_i}) \ge \varepsilon.$$ 
\end{lemma}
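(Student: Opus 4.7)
The plan is to identify three events whose intersection forces $E_{i+1}$ and to show that their combined probability is at least $\varepsilon$. I will work conditionally on $\mathscr F_{\mathscr U_i}$; the killed branching random walk $\mathscr T_{ub,1}$ of intensity $\tilde\pi$ started at $x_0:=\phi_m(u_{i+1})\in[\log(bu),\log u]$ sampled on line~4 of Algorithm~1 in iteration $i+1$ is a fresh sample, independent of $\mathscr F_{\mathscr U_i}$. Define
\[
A_1:=\{I(ub,b)\ge \varepsilon u^{-\rho}\},\quad A_2:=\{I(ub,0)\le \tfrac a2 u^{-\rho}\},\quad A_3:=\bigl\{\pi_m(V(v))\notin\mathscr U_i\ \text{for all}\ v\in\mathscr T_{ub,1}\bigr\}.
\]
On $A_1\cap A_2\cap A_3$ the algorithm never triggers the error branches on lines~\ref{error 1} or~\ref{error 2}, so its depth-first traversal exhausts the entire tree and in particular accumulates all the $\ge\varepsilon u^{-\rho}$ particles in $[\log b,0]$ into $\mathcal Y_{i+1}$; hence $E_{i+1}$ holds. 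The bounds $\mathbb P(A_1)\ge 3\varepsilon$ and $\mathbb P(A_2^c)\le\varepsilon$, uniformly over $u_{i+1}\in[bum,um]$, are already available from the choice of $\varepsilon$, $a$ and $u_0$ spelled out immediately above Algorithm~1 via Proposition~\ref{twokilled}.

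It therefore suffices to prove $\mathbb P(A_3^c\mid\mathscr F_{\mathscr U_i})\le\varepsilon$ once $m\ge m(u)$. Markov's inequality gives
\[
\mathbb P(A_3^c\mid\mathscr F_{\mathscr U_i})\le\sum_{s\in\mathscr U_i\cap\{bum,\dots,m\}}\mathbb E\bigl[\#\{v\in\mathscr T_{ub,1}:\pi_m(V(v))=s\}\bigr],
\]
and each expectation on the right I would control via the many-to-one lemma at the critical exponent $\rho$ (where $\psi(\rho)=1$). Under the $\rho$-tilted law the spine is a random walk $(V_n)$ started at $x_0$ with step distribution $\tilde\nu(dx):=e^{-\rho x}\tilde\pi(dx)$, which is a probability measure with positive mean $-\psi'(\rho)>0$ and finite variance. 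Writing $\tau$ for the first exit of $(V_n)$ from $(\log(ub),0]$ and $I_s:=(\phi_m(s-1),\phi_m(s)]$ for the preimage of $s$ under $\pi_m$, the many-to-one identity reads
\[
\mathbb E\bigl[\#\{v\in\mathscr T_{ub,1}:\pi_m(V(v))=s\}\bigr]=\mathbb E\Big[\sum_{n=0}^{\tau-1}e^{\rho(V_n-x_0)}\mathbbm 1_{V_n\in I_s}\Big].
\]
On $\{V_n\in I_s\}$ one has $V_n\le\phi_m(s)$ and $x_0\ge\phi_m(u_{i+1})$, so $e^{\rho(V_n-x_0)}\le C(s/u_{i+1})^{\rho}$.

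The step I expect to require the most care is the uniform renewal bound
\[
\mathbb E\Big[\sum_{n\ge 0}\mathbbm 1_{V_n\in I_s}\Big]\le \frac{C}{s},\qquad s\in\{bum,\dots,m\},\ x_0\in[\log(bu),\log u],
\]
with a constant $C$ depending only on $\tilde\pi$. Since $\tilde\nu$ has a bounded continuous Lebesgue density, positive mean and finite variance, this is the standard consequence of the renewal theorem that the potential density of such a walk is bounded, combined with $|I_s|\le 1/s$; uniformity in the starting position $x_0$ is automatic because the step law is fixed. Plugging this in yields $\mathbb E[\#\{v:\pi_m(V(v))=s\}]\le Cs^{\rho-1}u_{i+1}^{-\rho}$, and summing over $|\mathscr U_i\cap\{bum,\dots,m\}|\le a(m/u)^\rho$ vertices while using $s\ge bum$ and $u_{i+1}\ge bum$ gives
\[
\mathbb P(A_3^c\mid\mathscr F_{\mathscr U_i})\le Cab^{-1}u^{-\rho-1}m^{\rho-1}.
\]
Since $\rho<1$ and $u$ is fixed, this tends to $0$ as $m\to\infty$; choosing $m(u)$ so large that the right-hand side is at most $\varepsilon$ finally gives $\mathbb P(E_{i+1}\mid\mathscr F_{\mathscr U_i})\ge 3\varepsilon-\varepsilon-\varepsilon=\varepsilon$, as required.
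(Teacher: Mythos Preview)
Your three–event decomposition and the bounds on $\mathbb P(A_1)\ge 3\varepsilon$ and $\mathbb P(A_2^c)\le\varepsilon$ match the paper's structure exactly. There is, however, a gap in the implication $A_1\cap A_2\cap A_3\subset E_{i+1}$: the test on line~\ref{error 1} is against the \emph{current} set $\mathscr U$, which grows during the for-loop of step $i+1$, not merely against $\mathscr U_i$. Your event $A_3$ only rules out hits on $\mathscr U_i$; if two distinct particles $v,w\in\mathscr T_{ub,1}$ satisfy $\pi_m(V(v))=\pi_m(V(w))$, the second one explored triggers line~\ref{error 1} even on $A_3$. The repair is easy---on $A_2$ there are at most $\tfrac a2 u^{-\rho}$ particles, and Lemma~\ref{coupling} bounds each parent--child projection collision by $\beta/(bum)$, so a union bound over pairs gives an additional $O\bigl(u^{-2\rho}(bum)^{-1}\bigr)\to 0$---but as written the inclusion fails.

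For the collision estimate itself the paper takes a much shorter route than your many-to-one plus renewal argument. It applies Lemma~\ref{coupling} step by step: conditionally on the parent's position, the next explored particle projects to any fixed vertex $j\ge bum$ with probability at most $\beta/(bum)$. The stopping rule on line~\ref{error 2} caps the number of exploration steps deterministically at $\tfrac a2 u^{-\rho_-}+1$, and the current $\mathscr U$ always has at most $a u^{-\rho_-}m^{\rho_-}$ vertices, so a direct union bound gives
\[
\big(\tfrac a2 u^{-\rho_-}+1\big)\cdot a u^{-\rho_-}m^{\rho_-}\cdot \frac{\beta}{bum}\;\longrightarrow\; 0\qquad(m\to\infty),
\]
which simultaneously handles collisions with $\mathscr U_i$ and the self-collisions you missed, since the target set is the growing $\mathscr U$. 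Your spine/renewal computation yields a sharper $u$-dependence, but it is unnecessary here: the line~\ref{error 2} cap already bounds the number of particles without any appeal to the tilted walk's potential kernel, and Lemma~\ref{coupling} was proved precisely to supply the per-step bound.
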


\begin{proof}
Let $i \in \{0,\dots,d-1\}$. Working on $\mathscr{F}_{\mathscr U_i}$ we know the graph %$\mathscr{U}_1 , \ldots , 
$\mathscr{U}_{i}$ and the algorithm explores the branching random walk $\mathscr{T}_{ub,1}(\phi_m(u_{i+1}))$. We have to control the probability that the algorithm stops without $E_i$. This can happen on three different occasions: 
\begin{itemize}
\item \emph{Line~\ref{error 1}:} For an explored particle $v$ we have that $\pi_m(V(v)) \in \mathscr{U}_i$. \smallskip

Since $\pi_m(V(v)) \in (bum,m]$ we can use Lemma~\ref{coupling}, and find $m(u)\in \mathbb N$ such that for all $m \geq m(u)$ we can upper bound the probability that $V(v)$ is in a region that gets projected to a fixed vertex $j \in \mathscr{U}_i$  by
$$  \beta \left( (\pi_m(V(v)) \wedge j)^{-\gamma} (\pi_m(V(v)) \vee j)^{\gamma-1}  \right) \leq \frac{\beta}{bum}.$$
There are at most $\abs{\mathscr{U}_i}\leq a u^{-\rho_-}m^{\rho_-}$ such vertices. Therefore we get
\begin{align*}
    \mathbb{P}( \pi_m(V(v)) \in \mathscr{U}_i) \leq \frac{\beta |\mathscr{U}_i|}{bum} = a\beta b^{-1} u^{-\rho_--1}m^{\rho_--1} \, .
\end{align*}
Due to the condition in line~\ref{error 2}, there are at most $\frac{a}{2}u^{-\rho_-}+1$ exploration steps where we have to account this error before the algorithm stops. Hence we can bound the probability in the complete exploration of the tree (the complete for-loop) by 
\begin{align*}
    \big(\tfrac{a}{2}u^{-\rho_-}+1\big)u^{-\rho_- -1} a \beta b^{-1} m^{\rho_--1}.
\end{align*}
Increase $m(u)$ if necessary so that for $m \geq m(u)$ this probability is bounded by~$\varepsilon$. 
\item \emph{Line~\ref{error 2}:} During the exploration we find more than $\frac{a}{2}u^{-\rho_-}$ vertices. \smallskip

By choice of $a$ and $u_0$ we have
$$\mathbb P(\abs{\mathcal{B}_i} \geq \tfrac{a}{2}u^{-\rho_-})=  \mathbb{P}_{u_{i+1}} ( I(ub,0) \geq \tfrac{a}2 u^{-\rho_-}) \leq \varepsilon \, .$$ 
\item \emph{Line~\ref{error 3}:} We do not find at least $\varepsilon u^{-\rho_-}$ vertices that we can add to $\mathcal{Y}_i$. \smallskip

This probability is bounded by $\mathbb{P}_{u_{i+1}} ( I(ub,b) < \varepsilon u^{-\rho_-}) 
\leq1- 3\varepsilon$. 
\end{itemize}
Taking a union bound we get $\mathbb P(E_{i+1}^{\mathrm c} \mid \mathscr F_{\mathscr U_i})\leq 1-\varepsilon$, as requested.
\end{proof}

To complete the construction we have to remove the possible dependence of the size of the sets
$\mathcal Y_1,  \ldots , \mathcal Y_{d}$ by means of the following decoupling lemma. 

\begin{lemma}\label{decoupling}
  Let   $\mathcal Y_1,  \ldots , \mathcal Y_{d}$  be random sets such that, almost surely,
  $$\mathbb P(|\mathcal Y_{i+1}|\ge k \mid \mathscr F_{\mathscr U_i}) \ge \epsilon,$$
  then there exist random sets ${\mathcal X_i} \subset \mathcal Y_i$ with $X_i$ elements such that
  $X_1,\ldots X_d$ are independent and 
  $\mathbb P(X_{i}= k) = \epsilon \text{ and }
 \mathbb P(X_{i}=0) = 1- \epsilon.$
\end{lemma}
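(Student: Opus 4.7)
The plan is to enlarge the probability space by an independent sequence of auxiliary uniforms and \emph{thin} the events $\{|\mathcal Y_{i+1}|\ge k\}$ down to conditional probability exactly $\epsilon$. Once this conditional probability becomes deterministic, the desired independence of the resulting indicators follows automatically, so the entire proof reduces to a one-step thinning argument iterated $d$ times.

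Concretely, I would adjoin to the probability space an i.i.d.\ family $U_1,\ldots,U_d$ of uniform $[0,1]$ random variables, independent of the entire exploration, and work with the enlarged filtration $\mathcal G_i := \sigma(\mathscr F_{\mathscr U_i}, U_1,\ldots,U_i)$. Since $U_{i+1}$ is independent of $\mathcal G_i$ and $|\mathcal Y_{i+1}|$ is $\mathscr F_{\mathscr U_{i+1}}$-measurable, the conditional probability
$$p_{i+1} := \mathbb P\bigl(|\mathcal Y_{i+1}|\ge k \,\big|\, \mathcal G_i\bigr)$$
coincides with the quantity in the hypothesis of the lemma and therefore satisfies $p_{i+1}\ge\epsilon$ almost surely. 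I would then define
$$X_{i+1} := k\cdot \mathbbm 1_{\{|\mathcal Y_{i+1}|\ge k\}}\,\mathbbm 1_{\{U_{i+1}\le \epsilon/p_{i+1}\}},$$
which is well defined because $p_{i+1}\ge\epsilon>0$, and let $\mathcal X_{i+1}\subset \mathcal Y_{i+1}$ be the set of the $X_{i+1}$ smallest elements of $\mathcal Y_{i+1}$ in the natural order on $\{1,\ldots,m\}$, which ensures both measurability and $|\mathcal X_{i+1}|=X_{i+1}$.

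The verification is now immediate: using independence of $U_{i+1}$ from $\mathcal G_i$ together with the tower property,
$$\mathbb P\bigl(X_{i+1}=k \,\big|\, \mathcal G_i\bigr) = p_{i+1}\cdot \frac{\epsilon}{p_{i+1}} = \epsilon \quad \text{almost surely}.$$
Since this conditional probability is deterministic, $X_{i+1}$ is independent of $\mathcal G_i$, and in particular of $X_1,\ldots,X_i$, which are all $\mathcal G_i$-measurable. Iterating over $i=0,1,\ldots,d-1$ yields the required joint independence of $X_1,\ldots,X_d$ with the prescribed two-point distribution. The only point requiring mild care is the measurable selection of $\mathcal X_{i+1}$ inside $\mathcal Y_{i+1}$, but this is harmless given the fixed total order on the ground set; there is no deeper obstacle.
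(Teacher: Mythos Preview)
Your proof is correct and follows the same thinning idea as the paper: adjoin independent uniforms $U_1,\ldots,U_d$ and accept the event $\{|\mathcal Y_{i+1}|\ge k\}$ only when $U_{i+1}\le \epsilon/p_{i+1}$, forcing the conditional success probability down to exactly~$\epsilon$. The paper carries this out by partitioning on the binary tree of events $\mathscr E_{i_1\cdots i_j}$ and checking the joint law of $(X_1,X_2)$ by hand, whereas you condition on the full enlarged filtration $\mathcal G_i$ and invoke the fact that a deterministic conditional law yields independence; your formulation is a bit cleaner but the underlying mechanism is identical.
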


\begin{proof}
    Let $U_1,\ldots, U_d$ be independent and uniformly distributed on $(0,1)$. Let $\mathscr E_1$ be the event that $\mathcal Y_1$ has at least $k$  elements and \smash{$U_1\leq \frac\epsilon{\mathbb P(|\mathcal Y_1|\ge k)}$} so that $\mathbb P(\mathscr E_1)=\epsilon$. On the event $\mathscr E_1$ we draw $k$ elements from $\mathcal Y_1$ without replacement and put ${\mathcal X_1}$ to be the set of elements thus drawn. On $\mathscr E_0=\mathscr E_1^c$
    we set ${\mathcal X_1}=\emptyset$. Then $X_1$ has the desired distribution.\smallskip

    Now let $\mathscr E_{i1}$ be the event $\mathscr E_i$ intersected with the event that $\mathcal Y_2$ has at least $k$  elements and $U_2\leq \frac\epsilon{\mathbb P(|\mathcal Y_2|\ge k \mid \mathscr E_i)}$. Then 
    $$\mathbb P(\mathscr E_{i1})=
    \mathbb P(\mathscr E_i) \mathbb P\big( |\mathcal Y_2|\ge k, U_2\leq \tfrac\epsilon{\mathbb P(|\mathcal Y_2|\ge k \mid \mathscr E_i)}\mid \mathscr E_i\big)
    = \epsilon \mathbb P(\mathscr E_i).$$ 
    On the event $\mathscr E_{i1}$ we draw $k$ elements from $\mathcal Y_2$ without replacement and put ${\mathcal X_2}$ to be the set of elements thus drawn. On $\mathscr E_{i0}=\mathscr E_i \setminus \mathscr E_{i1}$ we set ${\mathcal X_2}=\emptyset$. Then
    \begin{align*}
        \mathbb P(X_1=k, X_2=k) = \mathbb P( \mathscr E_{11}) = \epsilon^2, \qquad & \mathbb P(X_1=0, X_2=k) = \mathbb P( \mathscr E_{01}) = (1-\epsilon)\epsilon, \\
        \mathbb P(X_1=k, X_2=0) = \mathbb P( \mathscr E_{10}) = \epsilon(1-\epsilon), \qquad & \mathbb P(X_1=0, X_2=0) = \mathbb P( \mathscr E_{00}) = (1-\epsilon)^2. 
    \end{align*}
    This implies that $X_2$ has the desired distribution and $X_1$ and $X_2$ are independent.
    We continue with this method until $\mathcal X_1 , \ldots , \mathcal X_{d}$ are constructed.
\end{proof}

\begin{proof}[Proof of Proposition~\ref{mainlemma}]
To prove Proposition~\ref{mainlemma} we run Algorithm~1 with parameters $(\tilde\pi,u,m)$ for the intensity measure 
$$\tilde\pi(dx)=\tilde\beta (e^{\gamma x} \mathbbm 1_{x>0}
+ e^{(1-\gamma) x} \mathbbm 1_{x<0}) \, dx,$$ 
with a slightly decreased density parameter $0<\tilde\beta<\beta$, $0<u<u_0$ and~$m\ge m(u)$. Input are the vertices $u_1,\ldots, u_d\in \mathcal U'$ and a graph $\mathscr U'$ distributed like the restriction of~$\mathscr G_m$ to its vertex set $\mathcal U'$. Lemma~\ref{coupling} ensures that the algorithm inserts an edge into~$\mathscr U$ with a probability no larger than the edge probabilities in~$\mathscr G_m$. Hence the output graph~$\mathscr U$ is stochastically dominated by the restriction of~$\mathscr G_m$ to its vertex set, denoted~$\mathcal U$. 
%We can therefore add edges to $\mathscr U$ so that is distributed like the restriction of~$\mathscr G_m$ to~$\mathcal U$.% 
\medskip% 

By Lemma~\ref{successlemma} the set $\mathcal U$ contains disjoint subsets $\mathcal{Y}_1, \ldots, \mathcal{Y}_d$ with
$$\mathbb P(\abs{\mathcal Y_{i+1}}\geq \varepsilon u^{-\rho_-} \mid \mathscr F_{\mathscr U_i}) \ge \varepsilon,$$   and Lemma~\ref{decoupling} gives the existence of random sets $\mathcal X_i \subset \mathcal Y_i$ with
size $X_i=|\mathcal X_i|$ such that
    \[
    X_i =
    \begin{cases}
        \lceil \varepsilon u^{-\rho_-} \rceil, & \text{with probability } \varepsilon > 0, \\
        0, & \text{with probability } 1 - \varepsilon,
    \end{cases}
    \]
    and $X_1,\ldots, X_d$ independent.
By construction the connected component of $u_i$ in $\mathscr{U}$ contains  $\mathcal{X}_i$ with $\mathcal{X}_i \cap \mathcal{X}_j = \emptyset$ for all $i \neq j$. The construction can be completed by embedding 
$\mathscr{U}$ into~$\mathscr{G}_m$ by adding 
%vertices and 
independent edges.\end{proof}

%{\color{orange} Algorithm~1 yields, for all initial vertices $v_1, \ldots, v_d$ with $bum < v_1 < \cdots < v_d \leq um$ where $d \leq m^{\rho_-}$, disjoint subsets $\mathcal{Y}_1, \ldots, \mathcal{Y}_d$ of particles, all with positions greater than $bm$. By Lemma~\ref{successlemma}, the algorithm explores $u_i$ successfully with probability at least $\varepsilon$.\medskip Lemma~\ref{decoupling} gives the existence of the random sets $\mathcal X_i$ with    \[    X_i =    \begin{cases}        \lfloor \varepsilon u^{-\rho_-} \rfloor, & \text{with probability } \varepsilon > 0, \\       0, & \text{with probability } 1 - \varepsilon.    \end{cases}    \]    such that the connected component of $v_i$ in $\mathscr{G}_m$ contains a set $\mathcal{X}_i$ with $X_i$ particles of index at least $bm$, and such that $\mathcal{X}_i \cap \mathcal{X}_j = \emptyset$ for all $i \neq j$. }\end{proof}

\section{Proof of Theorem~3.}

The lower bound follows directly from Theorem~2. We need to provide a matching upper bound.
Let $Z_k=| \{v \colon |\mathscr C_n(v)|\ge k\} |$ where  
$\mathscr C_n(v)$ is the connected component of $v$ in $\mathscr G_n$. Then we have
\begin{equation}\label{basic}
\mathbb P  \big(| S^{\text{max}}_n| \ge k\big) 
= \mathbb P  \big(Z_k \ge k\big) 
\leq \frac1k \mathbb E Z_k
= \frac{n}k \mathbb P  \big(| \mathscr C_n(O_n)| \ge k\big),
\end{equation}
where $O_n\in\{1,\ldots,n\}$ is uniformly chosen. We complete the argument in two steps. First, we dominate the graph $\mathscr C_n(O_n)$ by a branching random walk $\mathscr T_{0,1}(-X)$ started in a random placed vertex $-X$ and killed upon leaving the negative half-axis. For this purpose we slightly increase the edge density parameter $\beta$ in the definition of $\mathscr T_{0,1}(-X)$. The domination then holds unless the branching random walk visits a point located near (or to the left of) 
$-\log n$. The following proposition will be proved in Section~\ref{3.1}.

\begin{proposition}\label{coupling2}
For any $\beta<\tilde\beta<\beta_c$ and $\epsilon>0$ there is a coupling of $\mathscr G_n$ and a killed branching random walk $\mathscr T_{0,1}(-X)$ with intensity measure
$$\tilde\pi(dx)=\tilde\beta (e^{\gamma x} \mathbbm 1_{x>0}
+ e^{(1-\gamma) x} \mathbbm 1_{x<0}) \, dx$$
and standard exponential $X$ such that, for sufficiently large~$n$,
$$\mathbb P\big( | \mathscr C_n(O_n)| > |\mathscr T_{0,1}(-X)|\big)
\leq 
\mathbb P\big( \exists x \in \mathscr  T_{0,1}(-X) \text{ with } V(x) \leq  -(1-\epsilon)\log n \big).$$
%\leq n^{-\rho_++\epsilon}.$$
\end{proposition}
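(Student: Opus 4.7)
The plan is to mirror the exploration-and-coupling argument of Lemma~\ref{coupling} and Proposition~\ref{mainlemma}, but with the direction of stochastic domination reversed: here the killed BRW should \emph{dominate} the graph component, which is made possible by the strict slack $\tilde\beta>\beta$. I would set up the joint probability space by picking $U$ uniform on $(0,1]$, writing $O_n:=\lceil nU\rceil$ (uniform on $\{1,\ldots,n\}$) and $X:=-\log U$ (standard exponential), and starting the killed BRW $\mathscr T_{0,1}(-X)$ at position $-X$, which projects to $O_n$ under $\pi_n$ up to a discretisation error of size at most $1/O_n$ that is harmless on the event we care about.

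Next I would run a joint BFS exploration: pop a vertex $v$ from the graph queue and designate as its \emph{BRW representative} $\mathrm{rep}(v)$ the first particle of the BRW that projects to $v$ (for the root, $\mathrm{rep}(O_n)$ is the BRW origin at $-X$). For every unexplored $u$ the graph edge indicator $B_{vu}\sim\mathrm{Bern}(p_{vu}^{(n)})$ should be coupled with the Poisson count $N_{vu}$ of offspring of $\mathrm{rep}(v)$ projecting to $u$, monotonically, so that $B_{vu}\leq \mathbbm 1\{N_{vu}\ge 1\}$; if $B_{vu}=1$ one declares $\mathrm{rep}(u)$ to be one of the $\ge 1$ witnessing offspring and adds $u$ to the queue. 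Fixing $v$, the counts $(N_{vu})_u$ are independent across $u$ by the scattering property of the Poisson process, and for different $v$ the offspring processes are independent, so the pairwise monotone couplings can be carried out jointly. The coupling is feasible precisely when $\tilde\lambda_{rs}\ge -\log(1-p_{rs}^{(n)})$, where $\tilde\lambda_{rs}$ denotes the intensity of offspring of a particle at any position projecting to $r$ landing in the interval projecting to $s$.

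The main technical step, and the principal obstacle, is to establish this inequality uniformly for $r\wedge s\ge n^\epsilon$ and all sufficiently large $n$. Repeating the mean-value calculation of Lemma~\ref{coupling} but extracting a lower rather than upper bound should yield
$$\tilde\lambda_{rs}\ \ge\ \tilde\beta\,(r\vee s)^{\gamma-1}(r\wedge s)^{-\gamma}\Big(1-\tfrac{C}{r\wedge s}\Big),$$
while a Taylor expansion gives $-\log(1-p_{rs}^{(n)})=p_{rs}^{(n)}+O((p_{rs}^{(n)})^2)$ with $p_{rs}^{(n)}\le \beta(r\vee s)^{\gamma-1}(r\wedge s)^{-\gamma}$. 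For $r\wedge s\ge n^\epsilon$ both the multiplicative correction $C/(r\wedge s)$ and the quadratic remainder are of order $n^{-\epsilon\gamma}$ and thus vanish, while the constant slack $\tilde\beta-\beta>0$ persists; hence the desired inequality holds for all $n\ge n_0(\epsilon)$. The delicate point here is merely to keep track of constants so that the slack really absorbs every error term uniformly in $r,s\ge n^\epsilon$.

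Finally, on the complementary event $B^{\mathrm c}=\{\inf_{v\in\mathscr T_{0,1}(-X)}V(v)>-(1-\epsilon)\log n\}$ every surviving BRW particle projects to a vertex of index $>n^\epsilon$. Since $-X>-(1-\epsilon)\log n$ on this event, a straightforward induction along the BFS shows that every representative sits above the barrier, so every explored graph vertex has index $>n^\epsilon$ and the pairwise coupling step at each stage is available. Consequently each graph edge out of each $v$ is witnessed by a BRW offspring, the graph component embeds into the set of indices discovered by the BRW, and in particular $|\mathscr C_n(O_n)|\le|\mathscr T_{0,1}(-X)|$. The failure event $\{|\mathscr C_n(O_n)|>|\mathscr T_{0,1}(-X)|\}$ therefore sits inside $B$, which is the claim.
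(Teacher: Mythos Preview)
Your overall strategy --- generate $O_n$ and $X$ from a single uniform $U$, then run a monotone BFS coupling in which each graph edge is witnessed by a BRW offspring, using the slack $\tilde\beta>\beta$ --- is the paper's approach, and your cellwise intensity estimate for $r\wedge s\ge n^\epsilon$ is correct. But the final induction has a real gap.

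The assertion ``on $B^{\mathrm c}$ every explored graph vertex has index $>n^\epsilon$'' is circular. When you explore a graph vertex $v$ at index $r>n^\epsilon$ you must couple \emph{all} its potential edges, including those to targets $u$ of index $s\le n^\epsilon$. For such small $s$ the cellwise inequality $\tilde\lambda_{rs}\ge -\log(1-p_{rs}^{(n)})$ can genuinely fail: the cell $(\phi_n(s-1),\phi_n(s)]$ has width $1/s$, so the discretisation error is of constant order and does not vanish as $n\to\infty$. Concretely, at the worst-case position $V(x)=\phi_n(r)$ one finds for fixed small $s$ that $\tilde\lambda_{rs}/p_{rs}^{(n)}\to c_s(\gamma)\,\tilde\beta/\beta$ as $r\to\infty$ with $c_s(\gamma)<1$ for some $s$ (e.g.\ $s=2$), so for $\tilde\beta$ sufficiently close to $\beta$ you cannot force $B_{vu}\le\mathbbm 1\{N_{vu}\ge 1\}$ there. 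The chain ``$B_{vu}=1\Rightarrow N_{vu}\ge1\Rightarrow$ BRW has a particle in cell $u\Rightarrow u>n^\epsilon$ on $B^{\mathrm c}$'' therefore breaks exactly where you invoke it.

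The paper closes this gap with an additional \emph{aggregate} comparison (its Lemma~\ref{cou1}(ii)): for a \emph{fixed} threshold $n_0=n_0(\beta,\tilde\beta,\gamma)$ it shows that the probability a BRW particle projecting to $r>n_0$ has at least one offspring projecting into $\{1,\dots,n_0\}$ dominates $1-\prod_{m=1}^{n_0}(1-\beta m^{-\gamma}r^{\gamma-1})$, the probability that vertex $r$ has some graph neighbour in $\{1,\dots,n_0\}$. Combined with the cellwise comparison for $m>n_0$ (Lemma~\ref{cou1}(i)) this gives: whenever the graph would step into $\{1,\dots,n_0\}$, the BRW simultaneously produces a particle there, hence below $\phi_n(n_0)\le -(1-\epsilon)\log n$ for large $n$. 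On the complement of that event the exploration is confined to $\{n_0+1,\dots,n\}$, where the cellwise coupling is valid, and $|\mathscr C_n(O_n)|\le|\mathscr T_{0,1}(-X)|$ follows. Your argument is salvaged by inserting precisely this aggregate step; note also that a fixed $n_0$ rather than the growing threshold $n^\epsilon$ makes the constants cleaner, since $n_0$ depends only on the gap $\tilde\beta-\beta$ and not on $n$.
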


The second step is to show that the probability that the killed branching random walk has a 
particle~$x$ with location $V(x) \leq  -(1-\epsilon)\log n$ or that its total progeny contains
substantially more than $n^{\rho_-}$ points is sufficiently small.
%The following proposition will be proved in Section~\ref{3.2}.  

\begin{proposition}\label{ldp}
For all $\epsilon>0$ and sufficiently large~$n$ we have
\begin{itemize}
    \item[(a)] $\displaystyle\mathbb P\big( \exists x \in \mathscr  T_{0,1}(-X) \text{ with } V(x) \leq  -(1-\epsilon)\log n \big) \leq n^{-\rho_++\epsilon}.$
    \item[(b)] $\displaystyle \mathbb P  \big( |\mathscr T_{0,1}(-X)| \ge n^{\rho_-+\epsilon} \big) \leq n^{-\rho_++\epsilon}.$
\end{itemize}
\end{proposition}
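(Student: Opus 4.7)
The plan is to handle the two parts separately, each leveraging the additive martingales at the roots of $\psi=1$, but with (a) needing only a first-moment/stopping-line argument and (b) requiring a $p$-th moment control on the total progeny.

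For part (a), I would invoke the additive martingale $M_n = \sum_{|v|=n} e^{-\rho_+ V(v)}$, which is a martingale because $\psi(\rho_+) = 1$. Started at $-x$ it has $M_0 = e^{\rho_+ x}$. Applying the Biggins--Kyprianou optional stopping identity to the stopping line $L = \{v : V(v) \leq -(1-\epsilon)\log n,\ V(\cev v) > -(1-\epsilon)\log n\}$ gives $\mathbb E_{-x} \sum_{v\in L} e^{-\rho_+ V(v)} \leq e^{\rho_+ x}$. Since each $v \in L$ contributes at least $n^{\rho_+(1-\epsilon)}$, Markov yields $\mathbb P_{-x}(L \neq \emptyset) \leq e^{\rho_+ x} n^{-\rho_+(1-\epsilon)}$. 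Integrating against the standard exponential density of $X$ produces the factor $\mathbb E e^{\rho_+ X} = (1-\rho_+)^{-1}$, which is finite because $\rho_+ + \rho_- = 1$ and $\rho_- > 0$ in the subcritical regime. Since $\rho_+ \leq 1$, the resulting bound is at most $n^{-\rho_+ + \epsilon}$ after absorbing the constant for large~$n$.

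For part (b), the plan is a $p$-th moment argument with $p$ just below $\rho_+/\rho_-$. The core ingredient will be a uniform estimate
\begin{equation*}
\mathbb E_{-x}\, T(x)^p \leq C_p\, e^{p \rho_- x}, \qquad T(x) := |\mathscr T_{0,1}(-x)|,
\end{equation*}
valid for all $x \geq 0$. Granted this, Markov and integration over $X \sim \mathrm{Exp}(1)$ give
\begin{equation*}
\mathbb P\bigl(T(X) \geq n^{\rho_-+\epsilon}\bigr) \leq \int_0^\infty e^{-x} \min\!\bigl(1,\, C_p\, e^{p\rho_- x} n^{-p(\rho_-+\epsilon)}\bigr)\, dx.
\end{equation*}
Splitting the integral at $x^\star = (\rho_-+\epsilon)(\log n)/\rho_-$, the piece on $[0, x^\star]$ is controlled by $C_p n^{-p(\rho_-+\epsilon)}/(1 - p\rho_-)$ using $p\rho_- < \rho_+ < 1$, while the piece on $[x^\star, \infty)$ is bounded by $\mathbb P(X \geq x^\star) = n^{-(\rho_-+\epsilon)/\rho_-} \leq n^{-1}$. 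Sending $p \uparrow \rho_+/\rho_-$ and using $\rho_+ < 1$ shows that both contributions are at most $n^{-\rho_+ + \epsilon}$ for large~$n$.

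The hard part will be establishing the uniform moment bound on $T(x)$. My intended route is to view $T(x) = Z_x^\phi$ for the random characteristic $\phi_v = 1 + |\mathscr B_v|$ in the Nerman general branching process constructed in Section~\ref{sec_twokilled}, and to upgrade the distributional convergence $e^{-\rho_- x} Z_x^\phi \to m_\phi Y$ to convergence of $p$-th moments. Finiteness of $\mathbb E \phi^p$ reduces, as in Lemma~\ref{l1}, to summability of $\psi(\alpha)^n$ for $\alpha$ with $\psi(\alpha)<1$, and uniform integrability of the $p$-th powers can be obtained by adapting the argument of Lemma~\ref{l2}. A cleaner alternative is to iterate the distributional recursion $T(x) = 1 + \sum_{i : P_\varnothing(i) \leq x} T_i(x - P_\varnothing(i))$ and close an induction using $\psi(p\rho_-) < 1$ for $p < \rho_+/\rho_-$, which directly produces the constant $C_p$ independent of $x$.
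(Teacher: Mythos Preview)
Your argument for (a) via the $\rho_+$-additive martingale and a first-passage stopping line is correct and in fact cleaner than the paper's route: the paper reduces \emph{both} parts to a single bound on $\sum_k \tilde W_k$ (Lemma~\ref{l5}), which itself rests on a truncation-at-generation argument, whereas your (a) is essentially a one-line Markov inequality after optional stopping. The integrability of $e^{\rho_+ X}$ is exactly where $\rho_++\rho_-=1$ and $\rho_->0$ enter, so that step is fine.

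For (b) the target $\mathbb E_{-x}\,T(x)^p \le C_p\,e^{p\rho_- x}$ is the right one, and once granted, your integration over $X$ is correct. But neither of your two sketches closes the gap. Upgrading Nerman's theorem to $L^p$ convergence is not a corollary of Lemma~\ref{l2}: that lemma controls the additive martingale $W_n$, not the random-characteristic process $Z_t^\phi$, and passing from one to the other is precisely the uniform moment control you are trying to establish. The recursive route \emph{can} be made to work, but ``close an induction using $\psi(p\rho_-)<1$'' hides the real difficulty: for $p>2$ (needed whenever $\rho_+/\rho_->2$, i.e.\ whenever $\rho_-<\tfrac13$, which does occur in the subcritical regime) the $p$-th moment of a sum of conditionally independent terms produces cross-terms that are not dominated by $\psi(p\rho_-)$ alone; one needs a Rosenthal-type inequality and an inner induction on $\lfloor p\rfloor$, none of which you have written down.

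The paper sidesteps this entirely. It bounds $|\mathscr T_{0,1}(-X)|\le \sum_{k\ge 0}\tilde W_k$ and then splits the sum at a generation $N$: the first $N$ terms are controlled in $L^p$ by Lemma~\ref{l3} (costing only a polynomial factor $N^{p+1}$, via Lemma~\ref{l2} and $\psi(p\rho_-)<1$), while survival of the killed walk past generation $N$ is handled by an elementary large-deviation estimate for the leftmost particle (Lemma~\ref{l4}), giving decay $e^{-NI(\delta)}$. Choosing $N\sim c\log n$ yields the claim. This trades your hard uniform-in-$x$ moment bound for a completely explicit two-scale argument.
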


For convenience we prove Proposition~\ref{ldp} in Section~\ref{3.2}.   for
the original intensity measure $\pi$, but of course the result can be applied to $\tilde\pi$ with a density parameter $\beta<\tilde\beta<\beta_c$ so close to $\beta$ that the difference of the resulting $\rho_\pm$ to the original value is as small as required. 
\medskip

Combining Propositions~\ref{coupling2} and~\ref{ldp} we get, for all $\epsilon>0$ and sufficiently large~$n$, that
\begin{align*}
\mathbb P  \big(|\mathscr C_n(O_n)| \ge n^{\rho_-+\epsilon} \big)
&\leq \mathbb P  \big(|\mathscr C_n(O_n)| \ge |\mathscr T_{0,1}(-X)| \big)
+ \mathbb P  \big( |\mathscr T_{0,1}(-X)| \ge n^{\rho_-+\epsilon} \big)\\
& \leq  2n^{-\rho_++\epsilon}.
\end{align*}
Finally combining this with \eqref{basic} we infer that
\begin{align*}
\mathbb P  \big(| S^{\text{max}}_n| \ge n^{\rho_-+2\epsilon}\big) &
\leq  n^{1-\rho_--2\epsilon} \mathbb P  \big( | \mathscr C_n(O_n)| \ge n^{\rho_-+\epsilon}\big) \\ &
 \leq 2n^{1-(\rho_-+\rho_+)-\epsilon}= 2n^{-\epsilon} \to0,
 \end{align*}
as required.\medskip

\subsection{Proof of Proposition~\ref{coupling2}}
\label{3.1}

For the coupling we sample a killed branching random walk 
$\mathscr T_{0,1}(-X)$
started in $-X$ with intensity measure $\tilde\pi$ which has a slightly increased (but still subcritical) density parameter $\tilde\beta>\beta$. All particles on the positive halfline and their descendants are killed. We use the projection $\pi_n$ defined in \eqref{pim}
to project all particle locations on the negative half-line onto vertices 
in~$\{1,\ldots,n\}$ and retain all edges as in the genealogical tree of $\mathscr T_{0,1}(-X)$. We denote the resulting multigraph by~$\mathscr K_n$.
To prove that this coupling has the properties claimed in  Proposition~\ref{coupling2} we use the following lemma.

\begin{lemma}\label{cou1}
   There exists $n_0\in\mathbb N$ such that, for all sufficiently large $n$, 
\begin{itemize}   
   \item[(i)] for all 
   $n \ge m,r \geq n_0$ with $m \not=r$ the probability that a particle~$x$ in location $V(x)$ with $\pi_n(V(x))=r$ has an offspring~$y$ with location $V(y)$ satisfying $\pi_n(V(y))=m$
is at least 
$$\beta (r\wedge m)^{-\gamma} (r\vee m)^{\gamma-1}.$$
\item[(ii)] for all $n \ge r > n_0$ the probability that a particle~$x$ in location $V(x)$ with $\pi_n(V(x))=r$ has at least one offspring $y$ with location $V(y)$ satisfying $\pi_n(V(y))\leq n_0$ is at least $$1-\prod_{m=1}^{n_0} (1-\beta m^{-\gamma}r^{\gamma-1}).$$ 
\end{itemize}
\end{lemma}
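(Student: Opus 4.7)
The plan is to mirror the intensity calculation in Lemma~\ref{coupling} but in the opposite direction, exploiting the slack $\tilde\beta > \beta$ to convert its upper bounds into the required lower bounds after absorbing the quadratic correction in $1 - e^{-\lambda} \geq \lambda(1 - \lambda/2)$.

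For part~(i), fix a particle $x$ with $\pi_n(V(x)) = r$, write $v = V(x) \in (\phi_n(r-1), \phi_n(r)]$, and note that the number of offspring of $x$ with image $m$ under $\pi_n$ is Poisson with parameter
\[
\lambda_m(v) = \tilde\pi\bigl(\phi_n(m-1) - v,\, \phi_n(m) - v\bigr].
\]
The worst case (smallest intensity) is $v = \phi_n(r)$. Using the mean value theorem exactly as in Lemma~\ref{coupling}, together with the estimate $\phi_n(m-1) - \phi_n(r) = -\log(r/m) + O(1/(r\wedge m))$, I would derive
\[
\lambda_m(v) \geq \tilde\beta\,(r \wedge m)^{-\gamma}(r \vee m)^{\gamma - 1}\bigl(1 - C/(r \wedge m)\bigr).
\]
Choosing $n_0$ so large that $(1 - C/n_0)(1 - \tilde\beta n_0^{\gamma - 1}/2) \geq \beta/\tilde\beta$ and invoking the elementary bound $1 - e^{-\lambda} \geq \lambda(1-\lambda/2)$ then yields $1 - e^{-\lambda_m(v)} \geq \beta\,(r \wedge m)^{-\gamma}(r \vee m)^{\gamma - 1}$ whenever $r, m \geq n_0$, as claimed.

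For part~(ii) the cleanest route is to aggregate over $j \in \{1, \ldots, n_0\}$. The offspring counts of $x$ on the disjoint pre-images $\pi_n^{-1}(\{j\})$ are independent Poisson random variables, so the probability of no offspring with image $\leq n_0$ equals $e^{-\Lambda(v)}$ with
\[
\Lambda(v) = \tilde\pi\bigl((-\infty,\, \phi_n(n_0) - v]\bigr) = \frac{\tilde\beta}{1-\gamma}\, e^{(1-\gamma)(\phi_n(n_0) - v)}.
\]
Again the worst case is $v = \phi_n(r)$, for which $\phi_n(n_0) - \phi_n(r) = -\log(r/n_0) + O(1/n_0)$ gives $\Lambda(v) \geq \tfrac{\tilde\beta}{1-\gamma}\, n_0^{1-\gamma} r^{\gamma - 1}(1 - O(1/n_0))$. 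On the other hand, $-\log(1-x) \leq x + x^2$ for small $x$ combined with the Riemann-sum bound $\sum_{m=1}^{n_0} m^{-\gamma} \leq \tfrac{n_0^{1-\gamma}}{1-\gamma} + C$ yields
\[
-\log\prod_{m=1}^{n_0}\bigl(1 - \beta m^{-\gamma} r^{\gamma-1}\bigr) \leq \frac{\beta n_0^{1-\gamma}}{1-\gamma}\, r^{\gamma - 1}\,(1 + o(1)) \quad \text{as } r \to \infty.
\]
Since $\tilde\beta > \beta$ strictly, for $n_0$ and then $n$ sufficiently large we get $\Lambda(v) \geq -\log\prod_m(1 - \beta m^{-\gamma} r^{\gamma - 1})$, which is the required inequality after exponentiating.

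The main obstacle is bookkeeping: one must simultaneously control the $O(1/(r\wedge m))$ error from approximating $\log(r/m)$ by a Riemann sum, the quadratic loss from $1 - e^{-\lambda} \geq \lambda(1 - \lambda/2)$, and the fixed slack $\tilde\beta - \beta > 0$, and then fix $n_0$ depending only on $\tilde\beta - \beta$ and $\gamma$ so that all three align. The aggregate treatment of part~(ii) is what circumvents an awkward term-by-term comparison at small $j$, where the discretisation error is not small and a naive per-$j$ bound would demand $\tilde\beta$ significantly larger than $\beta$.
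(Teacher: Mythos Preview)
Your proposal is correct and follows essentially the same route as the paper: identify the worst-case position $v=\phi_n(r)$, estimate the Poisson intensity via the harmonic-sum approximation of $\log(r/m)$, and absorb the discretisation error into the slack $\tilde\beta>\beta$; for part~(ii) both you and the paper aggregate over $m\le n_0$ rather than compare term by term. The only cosmetic differences are that the paper introduces an intermediate $\beta<\beta'<\tilde\beta$ and uses $1-e^{-x}\ge(\beta/\beta')x$ and $e^{-x}\le 1-(\beta/\beta')x$ for small $x$ in place of your $1-e^{-\lambda}\ge\lambda(1-\lambda/2)$ and $-\log(1-x)\le x+x^2$, and that your ``as $r\to\infty$'' in part~(ii) should really read ``as $n_0\to\infty$, uniformly in $r>n_0$''.
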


When the killed branching random walk is sampled we first check whether it has a particle~$y$ with location $V(y)$ satisfying $\pi_n(V(y))\leq n_0$. If this is the case then, for sufficiently large $n$, we have $V(y) \leq  -(1-\epsilon)\log n$ . If this is not the case, then all particle locations of $\mathscr T_{0,1}(-X)$ are projected  onto vertices with index at least
$n_0$. Using Lemma~\ref{cou1} to compare the edge probabilities, we see that in this case
$\mathscr G_n$ is dominated by $\mathscr K_n$, which proves Proposition~\ref{coupling2}.

\begin{proof}[Proof of Lemma~\ref{cou1}$(i)$]
The probability of the event that a fixed particle~$x$ in location $V(x)$ with $\pi_n(V(x))=r$ has an offspring~$y$ with location $V(y)$ satisfying $\pi_n(V(y))=m$ equals
\begin{equation}\label{novert}
1-\exp\bigg(-\tilde\pi\Big( -\sum_{k=m}^n \frac1k -V(x), -\sum_{k=m+1}^n \frac1k-V(x)\Big]\bigg).
\end{equation}
As $\pi_n(V(x))=r$ we have
$$-\sum_{k=r}^n \frac1k < V(x) \leq -\sum_{k=r+1}^n \frac1k.$$
The probability in \eqref{novert} is therefore smallest when $V(x)=-\sum_{k=r+1}^n \frac1k$. It therefore remains to show that
%For every $\beta_c>\tilde\beta>\beta$  and $$\tilde\pi(dx)=\tilde\beta (e^{\gamma x} \mathbbm 1_{x>0} + e^{(1-\gamma) x} \mathbbm 1_{x<0}) \, dx$$
there exists $n_0\in\mathbb N$ such that, for $n_0\leq m<r$, we have
\begin{equation}\label{a1}
1-\exp\bigg(-\tilde\pi\Big( -\sum_{k=m}^{r} \frac1k, -\sum_{k=m+1}^{r}\frac1k\Big]\bigg)
\geq \beta m^{-\gamma}r^{\gamma-1},
\end{equation}
and, for $n_0\leq r<m$, we have
\begin{equation}\label{a2}
1-\exp\bigg(-\tilde\pi\Big(\sum_{k=r+1}^{m-1} \frac1k, \sum_{k=r+1}^{m} \frac1k\Big]\bigg)
\geq \beta m^{\gamma-1}r^{-\gamma}.
\end{equation}
For \eqref{a1} let $\beta<\beta'<\tilde\beta$ and hence
\begin{align*}
    \tilde\pi\Big( -\sum_{k=m}^{r} \frac1k, -\sum_{k=m+1}^{r}\frac1k\Big] & = \frac{\tilde\beta}{1-\gamma}\exp\Big({-(1-\gamma)\sum_{k=m+1}^{r} \frac1k} \Big)(  e^{\frac{1-\gamma}{m+1}}-1)\\
    & \geq  \frac{\tilde\beta}{m+1} \exp\Big(-(1-\gamma)
    (\log(\tfrac{r}m) + \tfrac{C}{n_0}) \Big)\\[2mm]
    & \geq  \beta'  m^{-\gamma}r^{\gamma-1},
\end{align*}
for some constant $C>0$, if $n_0\leq m<r$ for a suitable $n_0\in\mathbb N$.
Hence, using that $1-e^{-x}\ge (\beta/\beta')x$ for sufficiently small~$x$, we get
\begin{align*}
  1-\exp\bigg(-\tilde\pi\Big( -\sum_{k=m}^{r} \frac1k, -\sum_{k=m+1}^{r}\frac1k\Big]\bigg) & \geq  \beta  m^{-\gamma}r^{\gamma-1}.
\end{align*}
The calculation giving \eqref{a2} is analogous.
\end{proof}

%\begin{lemma}\label{cou2} For every $\beta_c>\tilde\beta>\beta$  there exists $n_0\in\mathbb N$ such that, for all $n_0\leq r$ we have $$1-\exp\bigg(-\tilde\pi\Big( -\infty, -\sum_{k=n_0+1}^{r}\frac1k\Big)\bigg) \geq  1-\prod_{m=1}^{n_0} (1-\beta m^{-\gamma}r^{\gamma-1}).$$ \end{lemma}

\begin{proof}[Proof of Lemma~\ref{cou1}$(ii)$] For $\beta<\beta'<\beta''<\tilde\beta$ we have \begin{align*}    \tilde\pi\Big( -\infty, -\sum_{k=n_0+1}^{r}\frac1k\Big] & = \frac{\tilde\beta}{1-\gamma}\exp\Big({-(1-\gamma)\sum_{k=n_0+1}^{r} \frac1k} \Big)\\   & \geq  \frac{\tilde\beta}{1-\gamma}   \exp\Big(-(1-\gamma)   (\log(\tfrac{r}{n_0}) + \tfrac{C}{n_0}) \Big)\\[2mm]    & \geq  \frac{\beta''}{1-\gamma}   n_0^{1-\gamma}r^{\gamma-1} \geq \beta' \sum_{m=1}^{n_0} m^{-\gamma}r^{\gamma-1}, \end{align*} for some constant $C>0$ and a suitable $n_0\in\mathbb N$. Hence $$1-\exp\bigg(-\tilde\pi\Big( -\infty, -\sum_{k=n_0+1}^{r}\frac1k\Big]\bigg) \geq 1- \prod_{m=1}^{n_0} e^{-\beta' m^{-\gamma}r^{\gamma-1}}.$$ Using that $e^{-x}\le 1-(\beta/\beta')x$ for sufficiently small~$x$ the result follows. \end{proof}

\subsection{Proof of Proposition~\ref{ldp}}
\label{3.2}

This section is concerned with the large deviations results for the killed branching random walk. The results here are rough versions of the very fine asymptotic results  presented in~\cite{aidekon}. The key difference is that our branching random walk has an infinite offspring distribution so that the moment requirements that are crucially used in 
\cite{aidekon} are not satisfied here. Instead we use  the moment bound on 
\begin{equation}
\label{critmart}
W_n=\sum_{|x|=n} e^{-\rho_- V(x)},
\end{equation}
which we received in Lemmas~\ref{l1} and~\ref{l2} exploiting the Poisson property of the offspring distribution.
Recall that $\mathbb P_u, \mathbb E_u$ refer to initial particles in position $\log u$. Shifting $\log u$ to the origin in Lemma~\ref{l2} we get, for
$1<p<\rho_+/\rho_-$ that
\begin{equation}\label{l2'}
\mathbb E_u \big[W_n^p\big]
\leq C u^{-p\rho_-},
\end{equation}
for some constant $C>0$.
%\medskip
%
We now look at several generations and allow the starting point to be a uniform random variable, note that the expectation $\mathbb E$ refers to exactly this situation.

\begin{lemma}\label{l3}
For $1<p<\rho_+/\rho_-$ we have, for all $N\in\mathbb N$,
that $$\mathbb E \Big[ \Big( \sum_{n=1}^N W_n \Big)^p\Big]
\leq C N^{p+1}.$$
\end{lemma}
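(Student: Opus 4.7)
My plan is to reduce the moment of the sum to moments of individual $W_n$ using a convexity bound, and then integrate the single-generation bound from \eqref{l2'} against the distribution of the (uniformly chosen) starting point.

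First I would apply the power mean inequality (equivalently, Jensen's inequality for $x\mapsto x^p$ with $p>1$) to get
$$\Big(\sum_{n=1}^N W_n\Big)^p \leq N^{p-1} \sum_{n=1}^N W_n^p,$$
so that
$$\mathbb E\Big[\Big(\sum_{n=1}^N W_n\Big)^p\Big] \leq N^{p-1} \sum_{n=1}^N \mathbb E[W_n^p].$$
It therefore suffices to show that $\mathbb E[W_n^p]$ is bounded uniformly in $n$ by a constant (this already gives the stronger bound $CN^p$; the extra factor $N$ in the statement is harmless slack).

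Next I would compute $\mathbb E[W_n^p]$ by conditioning on the uniformly distributed starting point. Writing the starting position as $\log u$ where $u$ has the appropriate distribution on $(0,1]$ (density bounded by a constant), \eqref{l2'} gives $\mathbb E_u[W_n^p] \leq Cu^{-p\rho_-}$. Hence
$$\mathbb E[W_n^p] \leq C \int_0^1 u^{-p\rho_-}\,du,$$
and the integral is finite provided $p\rho_- < 1$. Since $\rho_+ + \rho_- = 1$, the assumption $p < \rho_+/\rho_-$ is equivalent to $(p+1)\rho_- < 1$, which is strictly stronger than $p\rho_-<1$, so the integral converges to a finite constant depending only on $p$ and $\rho_-$. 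Combining the two steps yields
$$\mathbb E\Big[\Big(\sum_{n=1}^N W_n\Big)^p\Big] \leq N^{p-1}\cdot N \cdot \frac{C}{1-p\rho_-} \leq C' N^p \leq C' N^{p+1},$$
which is the claimed bound.

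No serious obstacle is expected: the only subtle point is to be sure that $p\rho_-<1$ under the hypothesis $p<\rho_+/\rho_-$, and to be explicit about which averaging distribution on the starting point is meant (implicitly the one coming from a uniformly chosen vertex, so $u\in(0,1]$ with a bounded density), so that the integral of $u^{-p\rho_-}$ is controlled near $0$.
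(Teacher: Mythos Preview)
Your proof is correct and follows essentially the same approach as the paper: reduce to single-generation moments via a convexity bound, apply \eqref{l2'}, and then average over the uniform starting point using $p\rho_-<1$. Your use of Jensen's inequality $\big(\sum W_n\big)^p\le N^{p-1}\sum W_n^p$ is in fact sharper than the paper's estimate $\big(\sum W_n\big)^p\le N^p\sum W_n^p$ (via the maximum), and your direct integration $\int_0^1 u^{-p\rho_-}\,du$ replaces the paper's dyadic decomposition in $-\log u$, so you obtain the slightly better bound $CN^p$; but the structure of the argument is the same.
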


\begin{proof}
We first estimate
$$\Big( \sum_{n=1}^N W_n \Big)^p \leq 
N^p  \max_{n=1}^N W_n^p \leq
N^p \sum_{n=1}^N W_n^p.$$
By~\eqref{l2'} we infer from this that
$$\mathbb E_u\Big[ \Big( \sum_{n=1}^N W_n \Big)^p \Big]
\leq N^p \sum_{n=1}^N \mathbb E_u\big[W_n^p\big]
\leq C N^{p+1}  u^{-p\rho_-}.$$
Now take $u$ uniformly random, split according to its value and apply the above, to~get
\begin{align*}
\mathbb E \Big[ \Big( \sum_{n=1}^N W_n \Big)^p\Big]
& \leq \sum_{j=0}^\infty \mathbb P\big( j\leq -\log U < j+1\big) \mathbb E_{e^{-j-1}}\Big[ \Big( \sum_{n=1}^N W_n \Big)^p \Big]\\
& \leq C N^{p+1} \sum_{j=0}^\infty e^{-j+p\rho_-(j+1)} \\
& \leq C N^{p+1} \frac{e^{-p\rho_-} }{1-e^{p\rho_--1}},
\end{align*}
as $p\rho_-<1$. This completes the proof.
\end{proof}

The next auxiliary lemma we need for the proof of Proposition~\ref{ldp} is an easy large deviations bound for the position of the leftmost particle in the branching random walk. Recall that $t^*\in(\rho_-, \rho_+)$ is uniquely defined as the solution of
$$ \frac1{t^*} \log \psi(t^*) = \frac{\psi'(t^*)}{\psi(t^*)}.$$

\begin{lemma}\label{l4}
For every $0<\delta<-\frac{\psi'(t^*)}{\psi(t^*)}$ there exists
$I(\delta)>0$ such that
$$\mathbb P_1 \big( \min_{|x|=N} V(x) \leq \delta N \big) \leq e^{-N I(\delta)}.$$
\end{lemma}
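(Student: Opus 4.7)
The plan is a standard Chernoff-type large deviation argument for the minimum of a branching random walk. The key tool is the many-to-one identity: under $\mathbb P_1$ (starting with a particle at the origin) one has $\mathbb E_1 \sum_{|x|=N} e^{-tV(x)} = \psi(t)^N$ for every $t$ in the finiteness domain $(\gamma, 1-\gamma)$ of $\psi$. This follows either by induction on the generation using conditional independence of the offspring Poisson displacements, or by noting that $M_n(t) := \psi(t)^{-n} \sum_{|x|=n} e^{-tV(x)}$ is a nonnegative martingale with $M_0(t) = 1$ (the same type of martingale used in the derivation of the first-moment method earlier in the paper).

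Equipped with this identity, I would combine a union bound with an exponential Markov inequality. For any $t > 0$ in the domain of $\psi$,
\begin{align*}
\mathbb P_1\bigl(\min_{|x|=N} V(x) \leq \delta N\bigr)
&\leq \mathbb E_1 \#\bigl\{|x|=N : V(x) \leq \delta N\bigr\} \\
&\leq e^{t\delta N}\, \mathbb E_1 \sum_{|x|=N} e^{-tV(x)}
= \exp\bigl(N\,(t\delta + \log \psi(t))\bigr).
\end{align*}
It remains to exhibit a single $t$ in the finiteness domain for which the bracketed exponent is strictly negative whenever $0 < \delta < \delta^* := -\psi'(t^*)/\psi(t^*)$.

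The natural choice is $t = t^*$. Indeed, the defining equation of $t^*$ rearranges to $\log\psi(t^*) = -t^*\delta^*$, so
$$t^* \delta + \log\psi(t^*) \;=\; t^*(\delta - \delta^*) \;<\; 0 \qquad \text{for } \delta < \delta^*,$$
yielding the claim with rate $I(\delta) := t^*(\delta^* - \delta) > 0$. (One could optimise over $t \in (\gamma, 1-\gamma)$ and obtain the full Legendre transform of $\log\psi$ as the rate function, but for the crude exponential bound required here the choice $t = t^*$ suffices.) There is no genuine obstacle: convexity of $\log\psi$ and the geometric interpretation of $t^*$ as the tangent-from-origin point to $t \mapsto \log\psi(t)$ are precisely what force the crossover from positive to zero rate at $\delta = \delta^*$, matching the velocity of the leftmost particle in the branching random walk.
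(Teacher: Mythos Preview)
Your proof is correct and follows the same Chernoff/many-to-one strategy as the paper: bound $\mathbb P_1(\min_{|x|=N} V(x)\le \delta N)$ by $e^{t\delta N}\psi(t)^N$ and choose $t$ so that the exponent is negative. The only difference is cosmetic---you plug in $t=t^*$ directly and use the defining relation $\log\psi(t^*)=-t^*\delta^*$, whereas the paper works with a $t\in(\rho_-,t^*)$ and a convexity comparison; your choice is in fact the cleaner one.
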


\begin{proof}
We pick $\rho_-<t<t^*$ and by strict convexity of $\log\psi$ we get
$$ \frac1{t} \log \psi(t) < \frac{\psi'(t^*)}{\psi(t^*)}.$$
Now we use the exponential Chebyshev inequality to get
\begin{align*}
\mathbb P_1 \big( \min_{|x|=N} V(x) \leq \delta N \big) & \leq \mathbb P_1 \Big( e^{-t \min\limits_{|x|=N} V(x)} \geq 
e^{-t\delta N} \Big)\\
& \leq e^{t\delta N} 
\mathbb E_1 \Big[ \sum_{|x|=N} e^{-t  V(x)} \Big]  =\exp\big( tN(\delta+ \tfrac1t \log \psi(t) \big)\\
& \leq \exp\bigg( tN\Big(\delta+ \frac{\psi'(t^*)}{\psi(t^*)} \Big)\bigg).
\end{align*}
Now we let $I(\delta)=-\delta-\frac{\psi'(t^*)}{\psi(t^*)}
>0$ to get the desired result.
\end{proof}

Let $\tilde{W}_n$ be as in \eqref{critmart} but with the sum restricted to the particles of the killed branching random walk. Combining the last two lemmas gives us the main step in the proof of Proposition~\ref{ldp}.

\begin{lemma}\label{l5}
We have, for every $\epsilon>0$ and all sufficiently large~$n$,
$$\mathbb P  \Big( \sum_{k=0}^{\infty} \tilde{W}_k \ge n^{\rho_-+\epsilon} \Big) \leq n^{-\rho_++\epsilon}.$$
\end{lemma}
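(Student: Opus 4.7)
The plan is to split the sum at generation $N=\lceil C\log n\rceil$ for a suitable large constant $C$, and treat the early and late parts separately. For the late part, the key observation is that once the killed branching random walk has no particle in some generation it stays extinct; hence the event $\{\sum_{k\ge N}\tilde W_k>0\}$ is contained in the event that the \emph{unkilled} BRW has a particle in generation $N$ lying in $(-\infty,0]$. Since the initial particle sits at $-X$ with $X$ standard exponential, this event is in turn contained in $\{\min_{|x|=N}V(x)\le X\}$, where the minimum is taken under $\mathbb P_1$. Picking any $\delta\in(0,-\psi'(t^*)/\psi(t^*))$ and splitting according to whether $X\le\delta N$, Lemma~\ref{l4} gives
$$
\mathbb P\Big(\sum_{k\ge N}\tilde W_k>0\Big)\ \le\ e^{-\delta N}+\mathbb P_1\big(\min_{|x|=N}V(x)\le\delta N\big)\ \le\ e^{-\delta N}+e^{-N I(\delta)},
$$
which is at most $2n^{-\rho_+}$ once $C\min(\delta,I(\delta))\ge\rho_+$.

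For the early part, we use $\tilde W_k\le W_k$ together with Lemma~\ref{l3}. Fixing $p\in(1,\rho_+/\rho_-)$ close to the upper endpoint, Markov's inequality yields
$$
\mathbb P\Big(\sum_{k=0}^{N-1}\tilde W_k\ge\tfrac12 n^{\rho_-+\epsilon}\Big)\ \le\ 2^p n^{-p(\rho_-+\epsilon)}\,\mathbb E\Big[\Big(\sum_{k=0}^{N-1}W_k\Big)^p\Big]\ \le\ C'(\log n)^{p+1}n^{-p(\rho_-+\epsilon)}.
$$
Including the generation-zero term is harmless since $\tilde W_0=e^{\rho_-X}$ has $\mathbb E[W_0^p]<\infty$ (as $p\rho_-<\rho_+<1$), so Lemma~\ref{l3} still applies after a trivial modification of the constant. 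For $p$ sufficiently close to $\rho_+/\rho_-$ one has $p(\rho_-+\epsilon)>\rho_+-\tfrac{\epsilon}{2}$, and the polylogarithmic factor $(\log n)^{p+1}$ is absorbed into $n^{\epsilon/2}$ for large $n$, yielding a bound of the form $n^{-\rho_++\epsilon}$.

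The main conceptual step is the truncation at $N\asymp\log n$: the moment bound in Lemma~\ref{l3} grows polynomially in $N$, so $N$ cannot be taken too large, while the extinction estimate requires $N$ to be at least of order $\log n$ in order to beat the $n^{-\rho_+}$ threshold. Both requirements are compatible precisely because the drift of the leftmost particle in Lemma~\ref{l4} and the exponential tail of $X$ contribute genuinely exponential decay in $N$, whereas Lemma~\ref{l3} loses only a polynomial factor. Once $C$ and $p$ are chosen so that all three contributions (the root $\tilde W_0$, the middle sum, and the tail sum) are individually bounded by $\tfrac13 n^{-\rho_++\epsilon}$, a union bound completes the argument. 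The only delicate point is that $p<\rho_+/\rho_-$ is strict in Lemma~\ref{l3}, so one cannot attain $p(\rho_-+\epsilon)=\rho_+$ for $\epsilon=0$; the extra room $\epsilon>0$ in the statement is exactly what allows the argument to work.
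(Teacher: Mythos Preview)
Your proposal is correct and follows essentially the same route as the paper: truncate at a generation $N\asymp\log n$, control the early part via Markov's inequality with the $p$th-moment bound of Lemma~\ref{l3}, and control the late part by the extinction estimate $\{\tilde W_N>0\}\subset\{\min_{|x|=N}V(x)\le X\}$ combined with Lemma~\ref{l4} and the exponential tail of~$X$. The paper makes the identical split (without the factor $\tfrac12$, since $\tilde W_N=0$ forces the whole tail to vanish) and chooses $N=\lceil(\log n)\rho_+/(I(\delta)+\delta)\rceil$ explicitly; your remark about the $k=0$ term is a fair point that the paper glosses over.
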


\begin{proof}
We fix $N\in\mathbb N$ arbitrarily and determine the value we require later. Then we split the left-hand side according to survival up to generation~$N$. This yields
\begin{align*}
\mathbb P  \Big( \sum_{k=0}^{\infty} \tilde{W}_k \ge n^{\rho_-+\epsilon} \Big)
& \leq \mathbb P  \Big(\sum_{k=0}^{N-1} W_k \ge n^{\rho_-+\epsilon} \Big)
+ \mathbb P  \big( \tilde{W}_N>0 \big) \\
& \leq n^{-p(\rho_-+\epsilon)} 
\mathbb E \Big[  \Big( \sum_{k=0}^{N-1} W_k \Big)^p \Big]
+ \mathbb P  \big( \min_{|x|=N} V(x) \leq 0 \big) \\
& \leq CN^{p+1} \, n^{-p(\rho_-+\epsilon)} 
+ \mathbb P_1  \big( \min_{|x|=N} V(x) \leq X \big),
\end{align*}
using Lemma~\ref{l3} in the last step. We pick
$0<\delta<-\frac{\psi'(t^*)}{\psi(t^*)}$ and get, by Lemma~\ref{l4}, 
\begin{align*}
\mathbb P_1  \big( \min_{|x|=N} V(x) \leq X \big)
& \leq \mathbb P_1  \big( \min_{|x|=N} V(x) \leq \delta N \big)
+ \mathbb P_1  \big( X \geq \delta N\big) 
\leq e^{-N (I(\delta)+\delta)}. 
\end{align*}
Setting $N =\lceil (\log n) \frac{\rho_+}{I(\delta)+\delta} \rceil$  completes the proof.
\end{proof}

\begin{proof}[Proof of Proposition~\ref{ldp}$(a)$]
Let $\mathscr Z_k$
be the particles alive in the $k$th generation in the killed
branching random walk. Then if there is $x \in \mathscr Z_k$ with $V(x) \leq  -(1-\epsilon)\log n$ we have
$\tilde{W}_k \ge e^{\rho_- (1-\epsilon)\log n}=n^{\rho_- (1-\epsilon)}$. Hence
\begin{align*}
    \mathbb P\big( \exists x \in \mathscr T_{0,1}(-X)  & \text{ with } V(x) \leq  -(1-\epsilon)\log n \big)
     \leq  \mathbb P\Big( \sum_{k=0}^\infty \tilde{W}_k \geq n^{\rho_- (1-\epsilon)} \Big),
\end{align*}
so that the required bound holds by Lemma~\ref{l5}.
\end{proof}

\begin{proof}[Proof of Proposition~\ref{ldp}$(b)$]
We first replace the total population size of the killed branching random by the sum of weighted particles with the same starting point,
$$|\mathscr T_{0,1}(-X)| \leq \sum_{k=0}^\infty \tilde{W}_k,$$
using that in the sum defining $W_n$ all particles located to the left of the origin get weight at least one. Hence
\begin{align*}
\mathbb P  \big( |\mathscr T_{0,1}(-X)| \ge n^{\rho_-+\epsilon} \big)
& \leq \mathbb P  \Big( \sum_{k=0}^{\infty} \tilde{W}_k \ge n^{\rho_-+\epsilon} \Big),
\end{align*}
and again 
the required bound holds by Lemma~\ref{l5}.
\end{proof}

%%
%% Bibliography
%%

%% Please use bibtex, 

%\bibliography{mainbib}

\bibliographystyle{alpha}
\bibliography{mainbib}
	
\end{document}